\def\@tocline#1#2#3#4#5#6#7{\relax
  \ifnum #1>\c@tocdepth 
  \else
    \par \addpenalty\@secpenalty\addvspace{#2}%
    \begingroup \hyphenpenalty\@M
    \@ifempty{#4}{%
      \@tempdima\csname r@tocindent\number#1\endcsname\relax
    }{%
      \@tempdima#4\relax
    }%
    \parindent\z@ \leftskip#3\relax \advance\leftskip\@tempdima\relax
    \rightskip\@pnumwidth plus4em \parfillskip-\@pnumwidth
    #5\leavevmode\hskip-\@tempdima
      \ifcase #1
      \or\or \hskip 2em \or \hskip 2homologyem \else \hskip 3em \fi%
      #6\nobreak\relax
    \dotfill\hbox to\@pnumwidth{\@tocpagenum{#7}}\par
    \nobreak
    \endgroup
  \fi}
\theoremstyle{plain}
\newtheorem{introtheorem}{Theorem}[]
\newtheorem{theorem}{Theorem}[section]
\newtheorem{lemma}[theorem]{Lemma}
\newtheorem{corollary}[theorem]{Corollary}
\newtheorem{proposition}[theorem]{Proposition}
\theoremstyle{definition}
\newtheorem{notation}[theorem]{Notation}
\newtheorem{remark}[theorem]{Remark}
\newtheorem{definition}[theorem]{Definition}
\numberwithin{equation}{section}
\newcommand{\Union}{\bigcup}
\newcommand{\ord}{{\rm ord}}
\newcommand{\mult}{{\rm mult}}
\newcommand{\CH}{{\rm CH}}
\newcommand{\Spec}{{\rm Spec \,}}
\newcommand{\Sym}{{\rm Sym}}
\newcommand{\Gal}{{\rm Gal}}
\renewcommand{\tilde}{\widetilde}
\newcommand{\N}{{\mathbb N}}
\newcommand{\Z}{{\mathbb Z}}
\newcommand{\A}{{\mathbb A}}
\renewcommand{\P}{{\mathbb P}}
\newcommand{\KM}{{\rm K}^{\rm M}} 		
\def\<{\langle}
\def\>{\rangle} 
\def\-{\overline} 
\def\~{\widetilde}
\def\^{\widehat}
\def\@{\mathcal}
\def\!{\mathscr}
\def\#{\mathbb}
\def\_{\underline}
\begin{document}

\title{Geometric criteria for $\A^1$-connectedness and applications to norm varieties}

\author{Chetan Balwe}
\address{Department of Mathematical Sciences, Indian Institute of Science Education and Research Mohali, Knowledge City, Sector-81, Mohali 140306, India.}
\email{cbalwe@iisermohali.ac.in}

\author{Amit Hogadi}
\address{Department of Mathematical Sciences, Indian Institute of Science Education and Research Pune, Dr. Homi Bhabha Road, Pashan, Pune 411008, India.}
\email{amit@iiserpune.ac.in}

\author{Anand Sawant}
\address{School of Mathematics, Tata Institute of Fundamental Research, Homi Bhabha Road, Colaba, Mumbai 400005, India.}
\email{asawant@math.tifr.res.in}
\date{}
\thanks{Chetan Balwe acknowledges the support of SERB-DST MATRICS Grant: MTR/2017/000690}
\thanks{Anand Sawant acknowledges the support of SERB Start-up Research Grant SRG/2020/000237 and the Department of Atomic Energy, Government of India, under project no. 12-R\&D-TFR-5.01-0500.}

\begin{abstract} 
We show that $\A^1$-connectedness of a large class of varieties over a field $k$ can be characterized as the condition that their generic point can be connected to a $k$-rational point using (not necessarily naive) $\A^1$-homotopies.  We also show that symmetric powers of $\A^1$-connected smooth projective varieties (over an arbitrary field) as well as smooth proper models of them (over an algebraically closed field of characteristic $0$) are $\A^1$-connected.  As an application of these results, we show that the standard norm varieties over a  field $k$ of characteristic $0$ become $\A^1$-connected (and consequently, universally $R$-trivial) after base change to an algebraic closure of $k$. 
\end{abstract}

\maketitle

\setlength{\parskip}{2pt plus1pt minus1pt}

\section{Introduction}
\label{section introduction}

The study of near-rationality properties of algebraic varieties is a central problem in algebraic geometry.  This paper is motivated by the following question - do the standard norm varieties over a field constructed by Rost-Voevodsky \cite{Rost-ICM2002} (see also \cite{Suslin-Joukhovitski}) admit any near-rationality properties, after passing to the algebraic closure of the base field?  The standard norm varieties are a crucial input in the proof of the Bloch-Kato conjecture \cite{Voevodsky-Bloch-Kato}.  It has been shown in \cite[Proposition 2.6 and Remark 2.9]{Asok-unramified} that standard norm varieties over a field $k$ (corresponding to a symbol in the mod-$\ell$ Milnor $K$-group of $k$) are rationally connected after base change to $\-k$.  However, it is not known if they are retract rational.  Over a field $k$ of characteristic $0$, various near-rationality properties of varieties are related as follows:
\[
\begin{split}
\text{rational}~ \Rightarrow ~ \text{stably rational} ~ \Rightarrow ~ & \text{retract rational} ~ \Rightarrow ~ \text{unirational} ~ \Rightarrow ~ \text{rationally connected} \\
& \quad \quad \quad \Downarrow\\
& ~ \text{$\A^1$-connected}
\end{split}
\]
where a variety $X$ is $\A^1$-connected if the sheaf of its $\A^1$-connected components $\pi_0^{\A^1}(X)$ (in the sense of \cite{Morel-Voevodsky}) is the trivial (single point) sheaf.  See \cite[Section 1]{CT-Sansuc-rationality}, for example, for the horizontal implications (over an arbitrary field) and \cite[Theorem 2.3.6]{Asok-Morel} for the vertical implication.  It is an open question, whether $\A^1$-connectedness is equivalent to retract rationality \cite[Remark 2.3.11]{Asok-Morel}.  

Pfister quadrics corrresponding to symbols in mod-$2$ Milnor $K$-theory are norm varieties and are obviously $\A^1$-connected over an algebraically closed field, being rational.  By the main results of \cite{Nguyen}, Pfister quadrics are birational to standard norm varieties.  Consequently, standard norm varieties in the case $\ell = 2$ are rational and hence, $\A^1$-connected.  

Let $\ell$ be an arbitrary prime number.  Given an ordered sequence $\alpha = \{a_1, \ldots, a_n\}$ in $k^\times$, the standard norm variety $X_\alpha$ is constructed by an inductive procedure (see Section \ref{section norm varieties} for an outline of the construction following \cite{Rost-ICM2002} and \cite{Suslin-Joukhovitski}).  The standard norm variety $X_\alpha$ has nice splitting properties with respect to the symbol $\{a_1, \ldots, a_n\} \in \KM_n(k)/\ell$ (see \cite{Suslin-Joukhovitski}).  Note that it is not clear how standard norm varieties associated with given two ordered sequences of units $a_1, \ldots, a_n$ and $b_1, \ldots, b_n$ in $k$ such that equality of the associated symbols $\{a_1, \ldots, a_n\} = \{b_1, \ldots, b_n\} \in \KM_n(k)/\ell$ holds are related.  It has been conjectured \cite[Conjecture 1.1]{Nguyen} that such standard norm varieties are birational.  For a pair of units $\{a_1, a_2\}$, the standard norm variety is the Severi-Brauer variety associated with the cyclic algebra corresponding to the symbol $\{a_1, a_2\}  \in \KM_2(k)/\ell$, which is $\A^1$-connected precisely when it admits a $k$-rational point (since it is isomorphic to a projective space).  The main result of this paper shows that a standard norm variety is $\A^1$-connected, after passing to the algebraic closure $\-k$ of the base field $k$ (see Theorem \ref{theorem:norm varieties}).  

\begin{introtheorem}
\label{intro thm: norm varieties}
Let $k$ be a field of characteristic $0$ and let $\ell$ be any prime number.  Let $n\geq 2$ be an integer and $a_1, \ldots, a_n \in k^{\times}$.  Let $X_\alpha$ denote the standard norm variety associated with the ordered sequence $\alpha = \{a_1, \ldots, a_n\}$.  Then $(X_\alpha)_{\-k}$ is $\A^1$-connected.
\end{introtheorem}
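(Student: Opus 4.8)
The plan is to argue by induction on the weight $n\geq 2$ of the symbol, using the inductive construction of standard norm varieties recalled in Section~\ref{section norm varieties}. Since $k$ has characteristic $0$, the field $\-k$ is algebraically closed of characteristic $0$, so every result of the preceding sections is available over $\-k$; in particular we may use the theorem that symmetric powers of $\A^1$-connected smooth projective varieties, and their smooth proper models, are $\A^1$-connected, as well as the geometric criterion characterising $\A^1$-connectedness by the existence of a chain of (possibly non-naive) $\A^1$-homotopies connecting the generic point to a rational point.

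For the base case $n=2$, the variety $X_\alpha$ is the Severi-Brauer variety of the cyclic $k$-algebra attached to the symbol $\{a_1,a_2\}\in\KM_2(k)/\ell$. Over $\-k$ this algebra splits, so $(X_\alpha)_{\-k}\cong\P^{\ell-1}_{\-k}$ is rational, and in particular $\A^1$-connected.

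Now let $n\geq 3$, assume the theorem for symbols of weight $n-1$, and set $Y:=X_{\alpha'}$ for the truncated sequence $\alpha'=\{a_1,\ldots,a_{n-1}\}$. Then $Y$ is a smooth projective $k$-variety and, by the inductive hypothesis, $Y_{\-k}$ is $\A^1$-connected. By the construction of Section~\ref{section norm varieties}, $X_\alpha$ arises from $Y$ by Rost's symmetric-power construction: it is a smooth projective model of a variety built from $\Sym^\ell$ of a $\mathbb{G}_m$-bundle over $Y$, subject to a ``norm $=a_n$'' condition, together with auxiliary Severi-Brauer-type fibrations. The crucial point is that after base change to $\-k$ all of these twisting data become trivial --- the relevant algebras split and $a_n$ becomes a norm --- so that $(X_\alpha)_{\-k}$ is birational to a variety obtained from the symmetric power $\Sym^\ell(Y_{\-k}\times\P^1)$ by forming products with affine and projective spaces and by passing along Zariski-locally trivial rational fibrations; none of these operations changes the sheaf $\pi_0^{\A^1}$. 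In particular, after a product with a projective space if necessary, $(X_\alpha)_{\-k}$ becomes a smooth proper model of $\Sym^\ell(Y_{\-k}\times\P^1)$.

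Finally, $Y_{\-k}\times\P^1$ is smooth projective and $\A^1$-connected, being the product of the $\A^1$-connected varieties $Y_{\-k}$ and $\P^1$. Hence the first part of the theorem on symmetric powers shows that the (possibly singular) projective variety $\Sym^\ell(Y_{\-k}\times\P^1)$ is $\A^1$-connected, and the second part --- available because $\-k$ is algebraically closed of characteristic $0$ --- then shows that every smooth proper model of it is $\A^1$-connected. Since $(X_\alpha)_{\-k}$, possibly after multiplying by a projective space $\P^m$, is such a model, and since $(X_\alpha)_{\-k}$ is a retract of $(X_\alpha)_{\-k}\times\P^m$ (via the inclusion of a fibre) while $\A^1$-connectedness passes to retracts, we conclude that $(X_\alpha)_{\-k}$ is $\A^1$-connected, completing the induction. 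The main obstacle is the penultimate paragraph: one has to unwind the construction of Section~\ref{section norm varieties} over $\-k$ carefully enough to realise $(X_\alpha)_{\-k}$, up to birational equivalence and rational fibrations, as a smooth proper model of an explicit symmetric power of a smooth projective $\A^1$-connected variety, and to check that the auxiliary projective and torus bundles in the construction really leave $\pi_0^{\A^1}$ unchanged and that the singular symmetric powers one produces fall within the scope of the earlier results --- the last point being where resolution of singularities in characteristic $0$ and the geometric criterion for $\A^1$-connectedness are needed. The rest is formal.
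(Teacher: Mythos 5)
Your base case ($n=2$, Severi--Brauer, split over $\bar k$) matches the paper, and the overall strategy of induction on $n$ using the symmetric-power construction is the right framework. But the inductive step has a genuine gap precisely at the point you flag as the ``main obstacle,'' and the sketch you offer there is not a path through it.

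Recall the construction: $X_\alpha$ is a smooth compactification of the hypersurface $\{N = a_n\}$ inside the rank-$\ell$ vector bundle $\mathbb{V}(\mathcal E)$ on $U=(X^\ell\setminus\Delta)/S_\ell$, where $X=X_{\{a_1,\dots,a_{n-1}\}}$ and $\mathcal E=\pi_*\mathcal O|_U$ for $\pi\colon X\times\Sym^{\ell-1}X\to\Sym^\ell X$. Over a point of $U$ the fibre of $\{N=a_n\}$ is a twisted form of the torus $\{y_1\cdots y_\ell = a_n\}$, and over $\bar k$ (where $a_n$ is an $\ell$-th power) it is the norm-one torus of a degree-$\ell$ \'etale algebra with full symmetric Galois group. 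This torus is \emph{not} known to be rational, nor is the fibration Zariski-locally trivial (the bundle $\mathcal E$ only trivializes \'etale-locally). Endo's theorem, which the paper uses as Proposition~\ref{prop:norm-hypersurface}, gives only \emph{retract} rationality of this torus --- and that is exactly the strength needed, since retract rational implies $\A^1$-connected by \cite[Theorem 2.3.6]{Asok-Morel}. Your proposal instead asserts that, over $\bar k$, $(X_\alpha)_{\bar k}$ is birational (up to products with projective spaces and ``Zariski-locally trivial rational fibrations'') to a smooth proper model of $\Sym^\ell(Y_{\bar k}\times\P^1)$. That identification would in effect require the norm-one torus to be (stably) rational, which is an open and likely false strengthening; and the appearance of $\P^1$ and of $\Sym^\ell(Y\times\P^1)$ does not correspond to anything in the actual construction (a dimension count already makes the proposed birational equivalence impossible without artificial padding). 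So the reduction you propose is both unjustified and structurally different from what the geometry provides.

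The paper handles this by \emph{not} collapsing the norm fibration birationally. Instead it keeps the morphism from $X_\alpha$ to a smooth proper model of $\Sym^\ell X$, observes that the base is $\A^1$-connected by Theorem~\ref{theorem:symmetric_model}, that the generic fibre (a smooth proper model of the norm hypersurface over $k(\Sym^\ell X)$, which has a rational point since $k=\bar k$) is $\A^1$-connected by Proposition~\ref{prop:norm-hypersurface}, and then applies the fibration criterion Corollary~\ref{cor:fiberbase}. That corollary --- absent from your argument --- is the piece that lets one glue $\A^1$-connectedness of base and generic fibre into $\A^1$-connectedness of the total space, and it is what replaces the birational trivialization you were hoping for. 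Without it (or without an actual proof that the norm hypersurface becomes rational over $\bar k$, which is not available) the inductive step does not go through.
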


Since $\A^1$-connectedness is equivalent to universal $R$-triviality for smooth proper varieties, Theorem \ref{intro thm: norm varieties} says that the standard norm varieties over a field of characteristic $0$ are universally $R$-trivial after base change to the algebraic closure of the base field.  A result of Karpenko and Merkurjev \cite{Karpenko-Merkurjev} states that the standard norm variety $X_\alpha$ is universally $\CH_0$-trivial.  It is known that universal $\CH_0$-triviality is a strictly stronger condition than universal $R$-triviality.  Thus, Theorem \ref{intro thm: norm varieties} gives additional information about $X_\alpha$ on base change to $\-k$.

The proof of Theorem \ref{intro thm: norm varieties} contains two key ingredients, both of which are of independent interest.  The first ingredient is the following criterion for $\A^1$-connectedness of a variety $X$ over a field (see Theorem \ref{theorem:criterion2}). 

\begin{introtheorem}
\label{intro thm: criterion}
Let $X$ be a variety over a perfect field $k$. Suppose that there exists a point $x_0 \in X(k)$ such that for any $x \in X$, one of the following conditions hold:
\begin{enumerate}[label=$(\alph*)$]
\item $\overline{\{x\}}$ contains $x_0$, and $x$ and $x_0$ have the same image in $\pi_0^{\#A^1}(\overline{\{x\}})(k(x))$; 
\item there exists a smooth, irreducible curve in $X$ passing through $x$ and not contained in $\overline{\{x\}}$. 
\end{enumerate}
Then $X$ is $\#A^1$-connected. 
\end{introtheorem}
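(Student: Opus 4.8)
The plan is to unwind the definition of $\pi_0^{\A^1}(X)$ and reduce the statement to a pointwise assertion over $X$. Since sections of a Nisnevich sheaf over the spectrum of a field are insensitive to sheafification, for every finitely generated separable field extension $F/k$ the set $\pi_0^{\A^1}(X)(F)$ is a quotient of $X(F)$; and by the reduction of $\A^1$-connectedness to a statement over such field extensions recalled in the earlier sections, it suffices to show that this quotient has exactly one element for each $F$ — the $k$-point $x_0$ ensuring it is non-empty, so that it is enough to see that all elements of $X(F)$ have a common image in $\pi_0^{\A^1}(X)(F)$. An $F$-point of $X$ factors as $\Spec F \to \Spec k(x) \xrightarrow{\iota_x} X$ through the canonical morphism attached to its image $x \in X$, so, by functoriality of $\pi_0^{\A^1}$ along $k(x) \hookrightarrow F$, it is enough to prove: for every $x \in X$, the morphisms $\iota_x$ and $(x_0)_{k(x)}$ have the same image in $\pi_0^{\A^1}(X)(k(x))$. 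This is what I would establish, treating the two cases of the hypothesis in turn.

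Suppose first that $x$ satisfies $(a)$. Then $x_0 \in Z := \overline{\{x\}}$, so both $\iota_x$ and $(x_0)_{k(x)}$ factor through the immersion $Z \hookrightarrow X$, and by hypothesis they already have the same image in $\pi_0^{\A^1}(Z)(k(x))$; applying the natural transformation $\pi_0^{\A^1}(Z) \to \pi_0^{\A^1}(X)$ settles this case. So assume $(b)$ holds. I would first observe that $(b)$ can only occur at closed points of $X$: if a curve $C$ passes through $x$ and is not contained in $Z$, then $Z \subseteq \overline{C}$, which — as $\overline{C}$ is an irreducible curve — forces $\dim Z = 0$. Thus we are reduced to the case of a closed point $x \neq x_0$ through which there is a smooth irreducible curve $C \subseteq X$ with $\dim C = 1$.

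Let $\eta$ be the generic point of $C$, so $\overline{\{\eta\}} = \overline{C}$ is one-dimensional. As $\eta$ is not closed, $(b)$ cannot hold for it (the preceding observation would again force $\dim\overline{C} = 0$), so $(a)$ holds for $\eta$: we get $x_0 \in \overline{C}$, and that $\iota_\eta$ and $(x_0)_{k(\eta)}$ have the same image in $\pi_0^{\A^1}(\overline{C})(k(\eta))$ — in particular $\pi_0^{\A^1}(\overline{C})(k(\eta))$ identifies the generic point of $\overline{C}$ with a constant point. An irreducible curve whose function field has positive geometric genus is $\A^1$-rigid, so its $\pi_0^{\A^1}$ is the curve itself and no such identification can occur; hence $\overline{C}$ is an irreducible rational curve carrying $x_0$, whose generic point becomes identified with $x_0$ in $\pi_0^{\A^1}(\overline{C})$ over the function field. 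Since $C$ is smooth, the henselization $R := \mathcal{O}_{C,x}^{h}$ is a henselian discrete valuation ring with residue field $k(x)$ and fraction field $L \supseteq k(\eta)$ (admitting, as $k$ is perfect, a coefficient field lifting $k(x)$); the morphism $\Spec R \to C \hookrightarrow X$ yields a section of $\pi_0^{\A^1}(X)$ over $\Spec R$ which restricts to $[\iota_x]$ at the closed point and to the common value of $[\iota_\eta]$ and $[(x_0)_L]$ at the generic point. Comparing it with the constant section attached to $x_0$ would reduce the claim to the statement that these two sections, which agree over the generic point of $\Spec R$, already agree at its closed point.

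This last reduction is the crux, and it is not formal: $\pi_0^{\A^1}$ is not $\A^1$-invariant, so one cannot invoke ``unramifiedness'' directly. I would prove it by bringing in the structural input on $\pi_0^{\A^1}$ from the earlier sections — the criterion for $\A^1$-connectedness and the description of $\pi_0^{\A^1}$ of curves — together with the conclusion obtained above that every smooth irreducible curve in $X$ is rational: the normalization of $\overline{C}$ is then a smooth rational curve, which (once one accounts for rational points) is $\P^1_k$ or $\A^1_k$, so genuine $\A^1$-homotopies through $x$ become available and the generic-point equality at $\eta$ can be propagated down the curve to $x$. Granting this step, the pointwise equality holds for every $x \in X$, and the reduction of the first paragraph yields that $X$ is $\A^1$-connected. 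Everything else — the reduction to points, case $(a)$, and the dimension count confining $(b)$ to closed points — is routine manipulation of the functoriality of $\pi_0^{\A^1}$ and elementary properties of closures of curves.
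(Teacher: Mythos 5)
Your opening reduction is exactly the paper's: reduce to the pointwise claim that $\iota_x$ and $(x_0)_{k(x)}$ have the same image in $\pi_0^{\A^1}(X)(k(x))$ for every $x \in X$, and conclude by Morel's \cite[Lemma 6.1.3]{Morel-connectivity}. Case~$(a)$ is handled the same way. But from that point on your argument has a genuine gap, and it is exactly the step you flag yourself as ``the crux.''

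You propose to sit at the henselization $R=\mathcal{O}_{C,x}^{h}$ of the curve at $x$, observe that the two elements of $\pi_0^{\A^1}(X)(R)$ (the one induced by $C\hookrightarrow X$ and the constant one through $x_0$) agree over the fraction field, and then claim they must agree at the closed point. This is an unramifiedness/injectivity statement for the sheaf $\pi_0^{\A^1}(X)$ along $\Spec L\hookrightarrow\Spec R$, and you offer no proof of it. It is not at all formal: $\pi_0^{\A^1}$ is not known to be an unramified sheaf, nor even $\A^1$-invariant, for a general smooth variety $X$; the good behaviour is established in the literature for \emph{proper} $X$ (where $\pi_0^{\A^1}$ agrees with the chain-connected-components sheaf), but $X$ is not assumed proper here. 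Saying you ``would prove it by bringing in the structural input on $\pi_0^{\A^1}$ from the earlier sections'' and then ``granting this step'' concedes that the central difficulty is unresolved.

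The paper closes precisely this gap, but with a different mechanism. Rather than working one point at a time against a henselian local ring, the proof fixes a simplicially fibrant model $\mathcal{X}=L_{\A^1}(X)$ and builds a finite increasing chain of open subsets $\emptyset=U_{-1}\subset U_0\subset\cdots\subset X$, each new irreducible stratum $Z_i=U_i\setminus U_{i-1}$ equipped with a genuine simplicial homotopy $H_i\colon Z_i\times\Delta^1\to\mathcal{X}$ from the inclusion to the constant map at $x_0$. The base case $U_0$ exists because (a) holds at the generic point of $X$ and $\pi_0(\mathcal{X})(k(X))$ is the filtered colimit of $\pi_0(\mathcal{X})(U)$. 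For the inductive step at a movable generic point $z$ of a new component, the curve $\gamma\colon C\to X$ through $z$ meets an earlier $U_{i_0}$ at its generic point; the paper constructs an elementary Nisnevich cover $\{V_1\to\A^1_L,\ \mathbb{G}_{m,L}\to\A^1_L\}$ with $V_1\subset C$ \'etale over $\A^1_L$, uses the fibration $\mathcal{X}(V_1)\to\mathcal{X}(W)$ (an instance of SM7, since $W\hookrightarrow V_1$ is a cofibration and $\mathcal{X}$ is injective fibrant) to lift $H_{i_0}\circ\gamma|_W$, and then glues along the Nisnevich square to get a simplicial homotopy over $\A^1_L$, which it then spreads out to an open piece of the new stratum. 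That fibration-lifting over a Nisnevich square is the technical engine replacing your unproved henselian injectivity claim; there is no shortcut to it visible in your sketch.

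Two smaller remarks. First, the detour through $\A^1$-rigidity to force $\overline{C}$ to be rational is not in the paper and is not needed: the argument never requires rationality of the auxiliary curve, only that its generic point has already been connected to $x_0$. Second, your observation that condition~$(b)$ can only occur at closed points rests on reading ``curve in $X$'' as a $k$-subvariety. The precise statement in the body of the paper (Theorem~\ref{theorem:criterion2}, with Definition~\ref{defn movable}) allows a smooth curve over $\kappa(x)$ mapping to $X$, so non-closed points --- including generic points of strata --- can be movable, and the inductive step uses exactly such movable non-closed points. Restricting to closed points both misreads the intended hypothesis and makes the scheme of your induction misaligned with the one the paper actually runs.
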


A straightforward consequence of Theorem \ref{intro thm: criterion} is the characterization of $\A^1$-connectedness of a smooth proper variety $X$ over a field $k$ in terms of the condition that the generic point of $X$ can be connected to a $k$-rational point using naive $\A^1$-homotopies or equivalently, a chain of $\P^1$'s (see Corollary \ref{cor criterion}).  The hypotheses of Theorem \ref{intro thm: criterion} imply that the generic point of $X$ can be connected to $x_0$ by an \emph{$n$-ghost homotopy} in the sense of \cite[Definition 2.7]{Balwe-Sawant-ruled}, thanks to the results of \cite{Balwe-Hogadi-Sawant}.  The proof of Theorem \ref{intro thm: criterion} involves showing that the given $\A^1$-ghost homotopy between the generic point of $X$ and $x_0$ can be systematically enlarged to show that any two points of $X$ over a given essentially smooth field extension $F$ of $k$ map to the same element of $\pi_0^{\A^1}(X)(F)$, which allows one to conclude using \cite[Lemma 6.1.3]{Morel-connectivity}.  Theorem \ref{intro thm: criterion} and several useful consequences of it are proved in Section \ref{section criterion}.

The second key ingredient in the proof is $\A^1$-connectedness of smooth compactifications of the symmetric powers of an $\A^1$-connected variety over a field, studied in Section \ref{section symmetric}.  We show that the symmetric power of an $\A^1$-connected smooth projective variety is $\A^1$-connected (see Theorem \ref{theorem:symmetric}) as a consequence of Theorem \ref{intro thm: criterion} in the appendix.  However, this is not sufficient to conclude $\A^1$-connectedness of smooth compactifications of the symmetric powers as they are not smooth in general.  Since $\A^1$-connectedness is a birationally invariant property of smooth proper schemes, we only need to find one smooth birational proper model of the symmetric power of an $\A^1$-connected smooth projective variety.  This is done with the help of some tricky geometric arguments involving approximating points on rational varieties by rational curves and the geometric structure of symmetric powers.  The main result of Section \ref{section symmetric} is as follows (see Theorem \ref{theorem:symmetric_model}).

\begin{introtheorem}
\label{intro thm: symmetric}
Let $X$ be an $\A^1$-connected smooth projective variety over an algebraically closed field $k$ of characteristic $0$.  Then any smooth proper variety birational to $\Sym^m X$ is $\A^1$-connected, where $m \geq 1$ and $\Sym^m X$ denotes the quotient of $X^m$ by the action of the symmetric group $S_m$ by permutation of factors. 
\end{introtheorem}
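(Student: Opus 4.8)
The plan is to apply Theorem~\ref{intro thm: criterion} to a well-chosen smooth proper model $Y$ of $\Sym^m X$, using $\A^1$-connectedness of $X$ (and hence rational connectedness over $k = \-k$) as the source of curves. Since $\A^1$-connectedness is a birational invariant among smooth proper $k$-varieties, it suffices to produce \emph{one} smooth proper $Y$ birational to $\Sym^m X$ that satisfies the hypotheses of Theorem~\ref{intro thm: criterion}; equivalently, we may freely blow up and resolve as needed. First I would fix a $k$-rational point $x_0 \in X(k)$ (which exists since $X$ is $\A^1$-connected over an algebraically closed field) and take $y_0 = [(x_0, \ldots, x_0)] \in (\Sym^m X)(k)$; after resolving, arrange that $y_0$ lifts to a $k$-point of $Y$ lying over the smooth locus of $\Sym^m X$. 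The goal is then to check, for every point $y \in Y$, that either condition $(a)$ or condition $(b)$ of Theorem~\ref{intro thm: criterion} holds with respect to $y_0$.

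The key geometric input is that a point $y$ of $\Sym^m X$ is represented by an effective $0$-cycle $p_1 + \cdots + p_m$ on $X$ (with the $p_i$ possibly defined over a finite extension, and possibly with multiplicity), and that such a $0$-cycle can be \emph{moved}: since $X$ is rationally connected over $k = \-k$, any point $p_i$ can be joined to $x_0$ by a rational curve, and moreover by a \emph{very free} rational curve through $p_i$ (after possibly passing through $x_0$ as well). Deforming each $p_i$ along such a curve produces, inside $\Sym^m X$, a rational curve through $y$ whose general member is a cycle supported at $m$ general points; by taking the product of the deformations and composing, one obtains a rational curve in $\Sym^m X$ through $y$ and through $y_0$ that is not contained in $\overline{\{y\}}$ (the curve genuinely varies the support of the cycle). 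Lifting such a curve to the model $Y$: if the curve meets the locus where $Y \to \Sym^m X$ is an isomorphism, its strict transform is a rational curve in $Y$ through $y_0$, and a generic such curve through $y$ that is not contained in $\overline{\{y\}}$ will verify condition $(b)$ at $y$; points $y$ for which no such curve avoids the exceptional or singular loci must be handled by condition $(a)$ together with $\A^1$-connectedness of subvarieties $\overline{\{y\}}$ — but here I would instead use that we are free to choose the resolution so that every point can be connected, pushing all the difficulty into the following paragraph.

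The main obstacle, and the ``tricky geometric arguments'' alluded to in the text, is the interaction between the singularities of $\Sym^m X$ (the locus where cycles have repeated points) and the need for curves that are simultaneously (i) not contained in $\overline{\{y\}}$, (ii) passing through the fixed point $y_0$, and (iii) liftable to the smooth model $Y$, i.e.\ not trapped in the exceptional locus of the resolution. The symmetric power $\Sym^m X$ is singular precisely along the image of the big diagonal, and a point $y$ sitting deep in this stratification (e.g.\ $m\cdot p$ for a single closed point $p$) has small-dimensional $\overline{\{y\}}$ but lies on the singular locus, so one must argue that the deformation curves constructed above can be chosen transverse to the singular locus — equivalently, that the $0$-cycle can be instantly smeared out into $m$ distinct moving points. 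This is where rational connectedness of $X$ (giving many very free curves, whose generic deformations immediately separate points) and the explicit local structure of $\Sym^m X$ (étale-locally a product of symmetric powers of affine space, whose resolutions — e.g.\ via Hilbert schemes of points on surfaces, or iterated blow-ups along diagonals in general — are understood) must be combined. I expect the cleanest route is: (1) reduce to checking the criterion on a dense open plus a controlled boundary; (2) on the dense open where cycles are multiplicity-free, verify condition $(b)$ directly by the product-of-very-free-curves construction, which automatically moves off $\overline{\{y\}}$; (3) for boundary points, either exhibit a curve transverse to the singular stratum (again using very free curves to separate the repeated points) to reduce to case $(b)$, or, when the point lies over a sufficiently small stratum, use that $\overline{\{y\}}$ is itself a symmetric-power-type variety to which an inductive/known instance of $\A^1$-connectedness applies, giving condition $(a)$. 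Throughout, one invokes Theorem~\ref{theorem:symmetric} (the smooth case) as the base case and as a tool for the strata.
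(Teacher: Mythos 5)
Your proposal correctly identifies the essential difficulty: the chain of rational curves connecting the generic point of $\Sym^m X$ to a $k$-rational base point naturally wants to start at the diagonal point $[x_0^m]$, which is the deepest singular stratum, and one must explain why such a chain lifts through the resolution $Y \to \Sym^m X$. However, the proposed resolution of this difficulty — ``exhibit a curve transverse to the singular stratum'' by very free curves, or fall back to condition (a) of Theorem~\ref{theorem:criterion2} with an inductive appeal to Theorem~\ref{theorem:symmetric} — is not carried through and does not obviously work. For a smooth proper $Y$, every non-generic point is automatically movable, so the content of the criterion reduces entirely to condition~(a) at the generic point of $Y$, i.e.\ to Corollary~\ref{cor criterion}: one must actually exhibit an $\A^1$-chain homotopy in $Y$ from a $k$-point to the generic point. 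The transversality language does not by itself tell you how to lift a curve passing through $[x_0^m]$ to a single well-defined $k$-point of $Y$ without knowing the structure of the resolution, which is intractable for $\dim X \geq 3$. And Theorem~\ref{theorem:symmetric} concerns the singular variety $\Sym^m X$ itself, not its smooth models, so it does not directly supply condition~(a) on strata of $Y$.

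The paper takes a genuinely different route that sidesteps the lifting problem altogether. The key new idea, which is absent from your plan, is the \emph{pre-arranging blowup} of Lemma~\ref{lemma pre-arrange-x}: one blows up $X$ at $k$-points over $x_0$ so that the lift $\tilde f$ of the chain $f$ to $\tilde X$ starts at a \emph{positive-dimensional} point $\tilde x_0$ on the exceptional divisor $E$. Then, by Lemma~\ref{be_positive_be_free}, the symmetrized point $\tilde x_0^m$ and all the chain's nodes lie in the \emph{free locus} of $\Sym^m \tilde X$, which is smooth. The remaining issue — that $\tilde x_0^m$ is not $k$-rational — is handled by noting $\tilde x_0^m \in \Sym^m E$ with $E$ a projective space, hence $\Sym^m E$ is rational, and then invoking Proposition~\ref{prop rational approximation} (rational approximation of curves via jet schemes, via weak factorization) to connect $\tilde x_0^m$ to a $k$-rational point of an affine-space chart of $\Sym^m E$. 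One then resolves $\Sym^m \tilde X$ only over its singular locus; the entire chain lies in the smooth locus and lifts for free. Your proposal does not anticipate Lemma~\ref{lemma pre-arrange-x}, Proposition~\ref{prop rational approximation}, or the role of the rationality of $\Sym^m E$, and these are precisely the ingredients that turn the intuitive plan into a proof. So while your diagnosis of the bottleneck is accurate, the argument as sketched has a genuine gap at the point of lifting through the singular stratum, which the paper closes by an entirely different mechanism.
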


Given the above two ingredients, the proof of Theorem \ref{intro thm: norm varieties} follows from the inductive construction of the standard norm varieties and the observation that compactifications of norm hypersurfaces are $\A^1$-connected (see Proposition \ref{prop:norm-hypersurface}).  The details can be found in Section \ref{section norm varieties}.

\subsection*{Acknowledgement}
We thank Aravind Asok, Jean-Louis Colliot-Th\'el\`ene, Bruno Kahn, Burt Totaro, Suraj Yadav and the anonymous referee for their comments. 

\subsection*{Conventions and notation}
By a variety over a field $k$, we mean a reduced and irreducible scheme of finite type over $k$.  We will always work with the big Nisnevich site $Sm/k$ of smooth schemes over a field $k$.  Given a simplicial sheaf of sets $\@X$ on $Sm/k$ and an affine scheme $\Spec A$ over $k$, we will write $\@X(A)$ for $\@X(\Spec A)$ for the sake of brevity.  

For a scheme $X$ (over $\Spec \Z$), the residue field of a scheme-theoretic point $x$ will be denoted by $\kappa(x)$.   In case $X$ is a scheme over over a field $k$, we will sometimes denote the residue field of $x$ by $k(x)$.

\section{Geometric criteria for \texorpdfstring{$\#A^1$}{A1}-connectedness}
\label{section criterion}

We will freely use the notation and terminology from \cite{Morel-Voevodsky}, \cite{Balwe-Hogadi-Sawant} and \cite{Balwe-Sawant-ruled}, especially regarding the sheaves of naive and genuine $\A^1$-connected components. 

For any scheme $U$ over $k$, we let $\sigma_0$ and $\sigma_1$ denote the morphisms $U \to U \times \#A^1$ given by $u \mapsto (u,0)$ and $u \mapsto (u,1)$, respectively.  An \emph{$\#A^1$-homotopy} of $U$ in a scheme $X$ over $k$ is a morphism $h: U \times \A^1 \to X$ and we say that $h$ connects $h(0)$ and $h(1)$.  An \emph{$\#A^1$-chain homotopy} of $U$ in $X$ is a finite sequence $h=(h_1, \ldots, h_r)$ where each $h_i$ is an $\#A^1$-homotopy of $U$ in $X$ such that $h_i(1) = h_{i+1}(0)$ for $1 \leq i \leq r-1$.  In this case, we say that $h_1(0)$ and $h_r(1)$ are \emph{$\#A^1$-chain homotopic}.
 
\begin{definition}
\label{defn movable}
A point $x$ of a scheme $X$ is said to be \emph{movable}, if there exists a smooth, irreducible curve $C$ over $\kappa(x)$, a point $c_0 \in C(\kappa(x))$ and a morphism $\gamma: C \to X$ such that $\gamma(c_0) = x$ and the image of $\gamma$ is not contained in the closure $\overline{\{x\}}$ of $x$ in $X$.
\end{definition}

It is easy to see that every non-generic smooth point of a variety is movable. In fact, a non-movable point on a variety has to be a singular point of every closed subvariety that properly contains it.  The main result of this section is the following geometric criterion for $\A^1$-connectedness.

\begin{theorem}
\label{theorem:criterion2}
Let $X$ be a variety over a perfect field $k$. Suppose that there exists a point $x_0 \in X(k)$ such that for any $x \in X$, one of the following conditions hold:
\begin{enumerate}[label=$(\alph*)$]
\item $\overline{\{x\}}$ contains $x_0$, and the $x$ and $x_0$ have the same image in $\pi_0^{\#A^1}(\overline{\{x\}})(k(x))$. 
\item $x$ is a movable point of $X$. 
\end{enumerate}
Then $X$ is $\#A^1$-connected. 
\end{theorem}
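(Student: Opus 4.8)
The strategy is to reduce $\A^1$-connectedness of $X$ to the statement that for every essentially smooth field extension $F/k$, any two points of $X(F)$ have the same image in $\pi_0^{\A^1}(X)(F)$; by \cite[Lemma 6.1.3]{Morel-connectivity} this suffices (once one has a $k$-point, so that $\pi_0^{\A^1}(X)$ has a section, and the sheaf is then forced to be trivial). The first step is therefore to fix an essentially smooth local $k$-algebra with fraction field $F$ and a point $x \colon \Spec F \to X$, and to produce an $\A^1$-ghost homotopy (in the sense of \cite[Definition 2.7]{Balwe-Sawant-ruled}) connecting $x$ to the constant point $x_0$ over $F$. Using the results of \cite{Balwe-Hogadi-Sawant} identifying $\pi_0^{\A^1}$ with the iterated naive connected components sheaf $\@L^{\infty}\@S$, it is enough to build such a ghost homotopy, since two points connected by a ghost homotopy have the same image in $\pi_0^{\A^1}(X)(F)$.

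The ghost homotopy is assembled from the hypotheses by a $\dim \overline{\{x\}}$-style induction on the scheme point $y \in X$ underlying $x$. If $y$ satisfies $(a)$, then $\overline{\{y\}}$ is a closed subvariety containing $x_0$ on which $y$ and $x_0$ already have the same image in $\pi_0^{\A^1}(\overline{\{y\}})(k(y))$; pushing forward along $\overline{\{y\}} \hookrightarrow X$ and specializing gives the required identification over $F$. If $y$ satisfies $(b)$, i.e. $y$ is movable, then choose a smooth irreducible curve $\gamma \colon C \to X$ through $y$ with image not contained in $\overline{\{y\}}$; a general point $y'$ of the image of $\gamma$ then has strictly larger closure (it dominates $C$ which maps out of $\overline{\{y\}}$), and the curve provides a naive $\A^1$-chain homotopy — or, after a base change and normalization argument on $C$, a genuine $\A^1$-homotopy segment — linking a specialization of $x$ to a point lying over $y'$. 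One then repeats the argument with $y'$ in place of $y$. Since each application of $(b)$ strictly increases the dimension of the closure of the underlying point, and the closure is bounded by $\dim X$, the process terminates — necessarily in an application of $(a)$, which anchors the chain at $x_0$. Concatenating these homotopy segments, carried out over a suitable semi-local extension of $F$ so that the intermediate specializations all make sense, yields the desired ghost homotopy from $x$ to $x_0$ over $F$.

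The main obstacle is bookkeeping the base fields: the points $y'$ produced by movability live over residue fields $k(y')$ that need not embed into $F$, so the naive homotopy segments must be spread out over an essentially smooth (semi-)local base dominating $F$ and then specialized back, and one must check that the genuineness of the $\A^1$-homotopy — as opposed to a mere chain of curves — survives these operations. This is precisely the point where one needs the "ghost homotopy" formalism rather than naive homotopies: a naive chain over the generic point need not descend, but a ghost homotopy does, and \cite{Balwe-Hogadi-Sawant} guarantees it computes $\pi_0^{\A^1}$ correctly. The remaining steps — verifying that $x_0 \in X(k)$ gives a global section of $\pi_0^{\A^1}(X)$, and invoking \cite[Lemma 6.1.3]{Morel-connectivity} to upgrade "all $F$-points agree" to "$\pi_0^{\A^1}(X) = \Spec k$" — are formal. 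A secondary technical point worth isolating as a lemma is the claim, used implicitly above, that a movable point always admits a curve realizing a strict increase in the dimension of the closure; this follows because the image of $\gamma$ is an irreducible curve not contained in $\overline{\{y\}}$, hence its generic point has one-dimensional closure meeting $\overline{\{y\}}$ but not contained in it, and iterating lands one at a point whose closure is an irreducible component of $X$.
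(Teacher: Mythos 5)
Your high-level strategy (reduce to connecting every $F$-point to the base point $x_0$ and then invoke \cite[Lemma 6.1.3]{Morel-connectivity}, organize the construction via the ghost-homotopy formalism of \cite{Balwe-Sawant-ruled}/\cite{Balwe-Hogadi-Sawant}, anchor at (a), and use movability to climb) matches the paper's conceptual outline. However, there is a genuine gap in the step where movability is exploited.

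You write that for a movable point $y$, "the curve provides a naive $\A^1$-chain homotopy --- or, after a base change and normalization argument on $C$, a genuine $\A^1$-homotopy segment --- linking a specialization of $x$ to a point lying over $y'$." This is not correct as stated. The curve $C$ guaranteed by movability is an \emph{arbitrary} smooth irreducible curve over $\kappa(y)$ (possibly of high genus), and such a curve does not in general furnish an $\A^1$-homotopy or an $\A^1$-chain homotopy between its points --- a smooth projective curve of positive genus is $\A^1$-rigid. The paper does not use the curve to manufacture a homotopy. Rather, it uses an open $V_1 \subset C$ with an \'etale map $f_1: V_1 \to \A^1_L$ sending $c_0 \mapsto 0$, and $V_2 = \A^1_L \setminus \{0\}$, to form an elementary Nisnevich cover of $\A^1_L$. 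The homotopy one actually uses is the \emph{already-established} simplicial homotopy $H_{i_0}$ on the lower-stratum open set $U_{i_0}$, pulled back along $\gamma$ to the overlap $W = V_1 \times_{\A^1_L} V_2$; fibrancy of the $\A^1$-local replacement $\@X$ then lets one lift along $\@X(V_1) \to \@X(W)$ and glue with the constant section on $V_2$ to obtain a morphism $\A^1_L \to \@X$. It is this Nisnevich-descent-and-lifting step that produces the identification of $y$ with $x_0$ in $\@X(L)$; the curve is only the source of the \'etale neighbourhood and the bridge into the previously-handled region, not itself the homotopy. Your invocation of "ghost homotopies" gestures at this, but you never construct the ghost homotopy, and your description of what the curve does misidentifies the mechanism.

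A related structural issue: your induction is on $\dim \overline{\{y\}}$ and tracks a single scheme point at a time, whereas the paper filters $X$ by open subsets $U_{-1} \subset U_0 \subset \cdots$ with each $Z_i = U_i \setminus U_{i-1}$ irreducible, and the inductive hypothesis furnishes a simplicial homotopy $H_i$ defined on all of $Z_i$ (not merely at its generic point). This is essential: the gluing step needs the previously-built homotopy over an honest open, because $H_{i_0}\circ\gamma|_W$ must be a morphism on $W$, not just a statement at a generic point. A pointwise dimension induction would therefore require an additional spreading-out argument to recover the open-set version; this is doable (it is how $U_0$ is obtained in the paper, from the colimit $\pi_0(\@X)(k(X)) = \varinjlim_{U} \pi_0(\@X)(U)$), but it is missing from your sketch. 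In short, the right skeleton is there, but both the actual descent/lifting argument and the correct form of the inductive hypothesis need to be supplied.
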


\begin{proof}
Let $L_{\A^1}$ denote the fibrant approximation functor \cite[\textsection 2, Lemma 2.6, page 107]{Morel-Voevodsky} for the $\#A^1$-model structure.  We define $\@X := L_{\#A^1}(X)$, so that $\pi_0^{\#A^1}(X) = \pi_0(\@X)$. 

We will prove that for any point $x$ of $X$, the images of $x$ and $x_0$ in $\pi_0^{\#A^1} (X)(k(x))$ are the same. This will prove that $\pi_0^{\#A^1}(X)(F)$ is a singleton for every finitely generated extension $F/k$. We can then deduce by \cite[Lemma 6.1.3]{Morel-connectivity} that $X$ is $\#A^1$-connected.  The strategy of the proof is to inductively construct a sequence of open subsets 
\[
\phi=:U_{-1} \subset U_0 \subset U_1 \subset \cdots \subset X
\]
such that the following conditions hold:
\begin{itemize}
\item[(A)] For every $i \geq 0$, $Z_i:= U_i\backslash U_{i-1}$ is irreducible and $Z_i$ is non-empty unless $U_{i-1} = X$. 
\item[(B)] For every $i \geq 0$, let $j_i, g_i \in \@X(Z_i)$ be defined as the compositions $Z_i \hookrightarrow X \to \@X$ and $ Z_i \to \Spec k \stackrel{x_0}{\to} X \to \@X$ respectively. Then, there exists a morphism $H_i: Z_i \times \Delta^1 \to \@X$ such that $H_i|_{Z_i \times \{0\}} = j_i$ and $H_i|_{Z_i \times \{1\}} = g_i$. 
\end{itemize}
As $X$ is noetherian, this sequence of open sets is of finite length and must stabilize with $U_N = X$ for some integer $N$. Note that 
$$\pi_0(\@X)(k(X)) = \mathop{\lim}_{U \subset X \text{ open}} \pi_0(\@X)(U).$$ Thus, there exists an open subset $U_0 \subset X$ such that the elements $j_0, g_0$ of $\@X(U_0)$, defined as in condition (B) above, are simplicially homotopic. Thus, we have a morphism of simplicial sheaves $H_0: U_0 \times \Delta^1 \to \@X$ connecting $j_0$ and $g_0$. 

We now suppose that $U_{-1} \subset U_0 \subset \cdots \subset U_i$ have been chosen. Let $Z$ be a component of the closed subset $X \backslash U_i$, let $z$ be its generic point and let $L = k(z)$. We claim that the images of $z$ and $x_0$ in $\@X(L)$ are simplicially homotopic.  One of the two conditions in the statement of the theorem must hold for $z$. If condition (a) holds, our claim is trivially true. Thus, we now assume that (a) does not hold for $z$ and hence, $z$ is movable. 

Thus, there exists a smooth, irreducible curve $C$ over $L$, a point $c_0 \in C(L)$ and a non-constant morphism $\gamma: C \to X$ such that $\gamma(c_0) = z$ and the image of $\gamma$ is not contained in $Z$.  Clearly, $\gamma$ maps the generic point of the curve $C$ into some $Z_{i_0}$ where $i_0 \leq i$.  

Let $V_1 := \gamma^{-1}(U_{i_0}) \cup \{c_0\}$, which is an open subset of $C$.  Choose an \'etale morphism $f_1: V_1 \to \#A^1_L$ such that $f(c_0) = 0$. By replacing $V_1$ by a smaller open neighbourhood of $c_0$ if necessary, we may also assume that $f_1^{-1}(0) = \{c_0\}$. Let $V_2 = \#A^1_L \backslash \{0\}$ and let $f_2: V_2 \to \#A^1_L$ be the inclusion map. Then $\{f_i:V_i \to \#A^1_L|i = 1,2\}$ is an elementary Nisnevich cover of $\#A^1_L$. Let $W:= V_1 \times_{\#A^1_L} V_2$. We observe that the morphism $W \to V_1$ is an open immersion. 

Let $h_1: V_1 \to \@X$ be the morphism 
\[
V_1 \hookrightarrow C \stackrel{\gamma}{\to} X \to \@X
\]
and let $h_2: V_2 \to \@X$ be the morphism
\[
V_2 \to \Spec k \stackrel{x_0}{\to} X \to \@X. 
\]
The morphism $h_1|_W$ is the same as $H_{i_0}|_{U \times \{0\}} \circ \gamma|_W$ and the morphism $h_2|_W$ is the same as $H_{i_0}|_{U \times \{1\}} \circ \gamma|_W$. Thus, $H_{i_0} \circ \gamma|_W$ gives us a simplicial homotopy connecting $h_1|_W$ and $h_2|_W$. 

Thus, we have the diagram
\[
\xymatrix{
\ast \ar[rr] \ar[d] && \@X(V_1) \ar[d] \\
\Delta^1 \ar[rr]_{H_{i_0} \circ \gamma|_W} \ar@{-->}[urr] && \@X(W)
}
\]
of simplicial sets, where the morphism on the right is a fibration since $W \hookrightarrow V_1$ is a cofibration and $\@X$ is simplicially fibrant. Thus, there exists a morphism from $\Delta^1$ to $\@X(V_1)$, indicated by the dotted arrow in the above diagram, making the diagram commutative. We denote this morphism by $H'$. Let $h'_1:= H'|_{V_1 \times \{1\}}$. Then $h'_1$ and $h_2$ can be glued together to define morphism from $\#A^1_L$ to $X$. This shows that the images of $z$ and $x_0$ in $\@X(L)$ are simplicially homotopic, concluding the proof of our claim.  

This simplicial homotopy can be extended to a suitable open subset of $Z$. In other words, there exists an open subset $Z_{i+1}$ of $Z$ such that the following conditions hold:
\begin{itemize}
\item[(1)] The morphisms $j_{i+1}, g_{j+1}: Z_{i+1} \to \@X$, defined as $Z_{i+1} \hookrightarrow X \to \@X$ and $Z_{i+1} \to \Spec k \stackrel{x_0}{\to} X \to \@X$ respectively, are connected by a simplicial homotopy $H_{i+1}: Z_{i+1} \times \Delta^1 \to \@X$. 
\item[(2)] $Z_{i+1}$ does not meet any component of $X \backslash U_i$ other than $Z$.
\end{itemize} 
Condition (2) ensures that $Z_{i+1} \cup U_i$ is an open subset of $X$, which we denote by $U_{i+1}$. This completes the induction step.  
\end{proof}

\begin{remark}
Let $X$ be a variety over $k$ and let $x, y \in X(k)$ be distinct points such that $x$ and $y$ map to the same element of $\pi_0^{\#A^1}(X)(k)$. Then $x$ must be a movable point of $X$. Indeed, let $\-X$ be a compactification of $X$. Then $\pi_0^{\#A^1}(\-X)(k) = \@S(\-X)(k)$, by \cite[Theorem 2.4.3]{Asok-Morel} or \cite[Theorem 2]{Balwe-Hogadi-Sawant}. Thus, $x$ and $y$ are $\#A^1$-chain connected on $\-X$. It follows from this that $x$ is a movable point of $X$. 

This observation implies that if $X/k$ is $\#A^1$-connected and has more than one $k$-rational point, then every $k$-rational point must be movable. This allows us to see that Theorem \ref{theorem:criterion2} may not hold for a variety $X$ if some points of $X$ are not movable. A simple example is given by the curve $C$ in $\#P^2_{\#R}$ defined by the homogeneous polynomial $Y^2 - X^2(X-1)$. This is a rational curve, and so its generic point can certainly be connected to an $\#R$-valued point. However, it is easy to see that the point $(0:0:1)$ is not movable. 
\end{remark}

A useful consequence of Theorem \ref{theorem:criterion2} for smooth proper varieties is that $\A^1$-connectedness can be characterized as the condition that the generic point of the variety can be connected to a $k$-rational point of it by a chain of $\P^1$'s (also see \cite[Theorem 8.5.1]{Kahn-Sujatha} for an equivalent treatment in terms of $R$-equivalence classes).

\begin{corollary}
\label{cor criterion}
A smooth proper variety $X$ over a field $k$ is $\A^1$-connected if and only if $X$ has a $k$-rational point and the generic point of $X$ can be connected to it (and hence, to any $k$-rational point of $X$) by an $\A^1$-chain homotopy.
\end{corollary}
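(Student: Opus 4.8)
The plan is to derive Corollary \ref{cor criterion} directly from Theorem \ref{theorem:criterion2}, using the fact that for a smooth proper variety, the notion of $\A^1$-chain homotopy (equivalently, chains of $\P^1$'s) is robust enough to detect $\A^1$-connectedness.

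First I would prove the ``only if'' direction. If $X$ is $\A^1$-connected and smooth proper, then $\pi_0^{\A^1}(X)$ is the point sheaf, so in particular $X$ has a $k$-rational point $x_0$. For a smooth \emph{proper} variety over $k$, one has $\pi_0^{\A^1}(X) = \@S(X)$, the naive (chain-connected components) sheaf, by \cite[Theorem 2.4.3]{Asok-Morel} or \cite[Theorem 2]{Balwe-Hogadi-Sawant}, as already invoked in the Remark above. Hence $\@S(X)(k(X))$ is a singleton, which means the generic point of $X$ and $x_0$ become equal after finitely many $\A^1$-chain homotopies defined over (a localization of) $k(X)$; unwinding the definition of $\@S$ and spreading out, this gives an $\A^1$-chain homotopy $C \to X$ from a smooth curve over $k(X)$ connecting the generic point to $x_0$. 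Since on a smooth proper $X$ any such chain of $\A^1$'s through a rational point can be upgraded to a chain of $\P^1$'s (the $\A^1$'s extend to $\P^1$'s by properness), and since $x_0$ can be joined to any other $k$-rational point by the same token, the stated conclusion follows.

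For the ``if'' direction, suppose $X$ is smooth proper, has a $k$-rational point $x_0$, and the generic point of $X$ can be connected to $x_0$ by an $\A^1$-chain homotopy. I want to verify the hypotheses of Theorem \ref{theorem:criterion2} for the point $x_0$. Let $x \in X$ be arbitrary. If $x$ is not movable, then by the observation following Definition \ref{defn movable}, $x$ must be a singular point of every proper closed subvariety containing it; but $X$ is smooth, so the only way $x$ fails to be movable is if $x$ is the generic point of $X$. For the generic point $\eta$, condition (a) of Theorem \ref{theorem:criterion2} requires that $\eta$ and $x_0$ have the same image in $\pi_0^{\A^1}(X)(k(X)) = \@S(X)(k(X))$ — and this is precisely what the assumed $\A^1$-chain homotopy between the generic point and $x_0$ gives (after spreading the chain out over an open subset and using $\pi_0^{\A^1}(X) = \@S(X)$ again). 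Every other point $x$ of $X$ is a smooth point, hence movable, so condition (b) holds. Theorem \ref{theorem:criterion2} then yields that $X$ is $\A^1$-connected.

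The main point to be careful about is the passage between ``$\A^1$-chain homotopy of the generic point'' and ``equal image in $\pi_0^{\A^1}(X)(k(X))$'': this is exactly where the smooth proper hypothesis enters, via the identification $\pi_0^{\A^1}(X) = \@S(X)$ for smooth proper $X$, together with the colimit formula $\pi_0^{\A^1}(X)(k(X)) = \varinjlim_{U \subset X} \pi_0^{\A^1}(X)(U)$ used in the proof of Theorem \ref{theorem:criterion2} to spread a homotopy over the generic point to a homotopy over a dense open. I do not expect any serious obstacle beyond bookkeeping, since all the substantive work has been done in Theorem \ref{theorem:criterion2}; the corollary is essentially a translation of its hypothesis (a) applied at the generic point into the more classical language of $R$-equivalence / chains of rational curves, the equivalence with chains of $\P^1$'s being standard for proper varieties.
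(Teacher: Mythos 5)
Your proposal is correct and follows essentially the same route as the paper: reduce to Theorem \ref{theorem:criterion2} by observing that on a smooth variety every non-generic point is movable (so only the generic point needs condition (a)), and then translate ``same image in $\pi_0^{\A^1}(X)(k(X))$'' into ``$\A^1$-chain homotopic'' via the identification $\pi_0^{\A^1}(X) = \@S(X)$ for smooth proper $X$ from \cite[Theorem 2.4.3]{Asok-Morel} or \cite[Theorem 2]{Balwe-Hogadi-Sawant}. The remark about extending $\A^1$'s to $\P^1$'s by properness and the ``spreading out'' step are unnecessary embellishments (the chain homotopy lives over $\Spec k(X)$ already, and the corollary is phrased in terms of $\A^1$-chain homotopies), but they do not affect correctness.
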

\begin{proof}
By Theorem \ref{theorem:criterion2}, it follows that any smooth variety $X$ is $\A^1$-connected if and only if the generic point of $X$ has the same image in $\pi_0^{\A^1}(X)(k(X))$ as a $k$-rational point of $X$.  If $X$ is proper in addition, the latter statement is equivalent to the statement that the generic point of $X$ can be connected to a $k$-rational point of $X$ by an $\A^1$-chain homotopy, by \cite[Theorem 2.4.3]{Asok-Morel} or \cite[Theorem 2]{Balwe-Hogadi-Sawant}.
\end{proof}

We end this section with two direct consequences of Corollary \ref{cor criterion}, which will be used to show $\A^1$-connectedness of norm varieties in Section \ref{section norm varieties}. 

\begin{corollary}
\label{cor:fiberbase}
Let $k$ be a field of characteristic $0$ and $Y\xrightarrow{f} X$ be a generically smooth morphism of smooth proper varieties over $k$.  If $X$ and the generic fiber of $f$ are $\A^1$-connected, then $Y$ is $\A^1$-connected. 
\end{corollary}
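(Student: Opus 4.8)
The plan is to apply Corollary \ref{cor criterion} to $Y$: since $Y$ is smooth and proper, it suffices to produce a $k$-rational point of $Y$ and an $\A^1$-chain homotopy connecting the generic point of $Y$ to it. First I would use the hypothesis that $X$ is $\A^1$-connected; being smooth and proper, $X$ has a $k$-rational point $x_0$ (one way to see this: $\pi_0^{\A^1}(X)(k)$ is a single point, and this point is represented by some $k$-rational point via \cite[Theorem 2.4.3]{Asok-Morel} or \cite[Theorem 2]{Balwe-Hogadi-Sawant}), and the generic point $\eta_X$ of $X$ is connected to $x_0$ by an $\A^1$-chain homotopy $(h_1, \ldots, h_r)$ of $\Spec k(X)$ in $X$, again by Corollary \ref{cor criterion}.

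Next I would lift this chain to $Y$. Since $f$ is generically smooth and $\Char k = 0$, after shrinking there is a dense open $U \subseteq X$ over which $f$ is smooth; I may assume the chain $(h_i)$ stays in $U$ by taking its image to lie in $U$ (the $h_i$ are maps $\Spec k(X) \times \A^1 \to X$ and generically we land in $U$, but $\A^1$ is not proper so one has to be a little careful — the cleaner route is to work with $R$-equivalence: over $k(X)$, the point $\eta_X$ is $R$-equivalent to $x_0$ in $X$, and since the fibers of $f$ over the relevant points are smooth proper and geometrically integral, an elementary $\A^1$-link upstairs covering each elementary $\A^1$-link downstairs can be constructed by picking a rational point of the generic fiber of the pulled-back family over $\A^1_{k(X)}$, using that such a fibration over $\A^1$ with $\A^1$-connected, hence $R$-connected, generic fiber has a section after an open shrink). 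The upshot is: the generic point $\eta_Y$ of $Y$, which lies over $\eta_X$, is $\A^1$-chain homotopic (over $k(Y)$) to a point $y_1 \in Y$ lying over $x_0$, i.e.\ a point of the fiber $Y_{x_0} = f^{-1}(x_0)$.

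Then I would invoke $\A^1$-connectedness of the generic fiber. The fiber $Y_{x_0}$ is a smooth proper variety over $k$ (smoothness because $x_0 \in U$; we arranged the chain to stay over $U$), and it is birational to the generic fiber of $f$ — actually one needs $Y_{x_0}$ itself to be $\A^1$-connected, which follows because $\A^1$-connectedness of smooth proper varieties is birationally invariant (a consequence of Corollary \ref{cor criterion}, since the generic-point-to-rational-point condition is birational) and $Y_{x_0}$, being a smooth proper fiber of a smooth morphism with $\A^1$-connected generic fiber, is birational to that generic fiber. Hence $Y_{x_0}$ has a $k$-rational point $y_0$, and by Corollary \ref{cor criterion} applied to $Y_{x_0}$ the generic point of $Y_{x_0}$ is $\A^1$-chain homotopic to $y_0$ within $Y_{x_0} \subseteq Y$; specializing, $y_1$ (a point of $Y_{x_0}$, possibly not its generic point) is linked to $y_0$ — here I would instead argue at the level of the generic point of $Y$ restricted to the fiber direction: base-changing to $k(X)$, the generic fiber $Y_{k(X)}$ is $\A^1$-connected and proper, so $\eta_Y$ is $\A^1$-chain homotopic over $k(Y)$ to a $k(X)$-rational point of $Y_{k(X)}$ which specializes from $y_0 \otimes_k k(X)$. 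Concatenating the two chains (the one in the fiber from $\eta_Y$ to a point over $x_0$, wait — rather: from $\eta_Y$ to $y_0$ inside the fiber over $\eta_X$, then the lifted chain from there down to $y_0 \in Y(k)$ over the chain from $\eta_X$ to $x_0$) exhibits an $\A^1$-chain homotopy from $\eta_Y$ to the $k$-rational point $y_0$, and Corollary \ref{cor criterion} gives that $Y$ is $\A^1$-connected.

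The main obstacle is the lifting step: $\A^1$-chain homotopies are morphisms out of $\Spec(\text{field}) \times \A^1$ and $\A^1$ is not proper, so one cannot naively restrict a homotopy in $X$ to the open $U$ where $f$ is smooth and then lift. The right fix is to reformulate everything in terms of $R$-equivalence (legitimate for smooth proper varieties by Corollary \ref{cor criterion} and the cited results of Asok--Morel / Balwe--Hogadi--Sawant), where an elementary $R$-link is a $\P^1$ mapping to the variety; a single such $\P^1$ in $X$ meeting $U$ pulls back to a family over $\P^1$ whose generic fiber is (geometrically) the $\A^1$-connected generic fiber of $f$, hence $R$-connected, hence has a rational point over the function field of that $\P^1$ — giving a $\P^1$ in $Y$ covering it after a further shrink and using properness of $Y$ to extend the section over all of $\P^1$. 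Checking that this produces a genuine chain in $Y$ connecting $\eta_Y$ to a $k$-point, and that the pieces concatenate correctly, is the technical heart; once it is in place, the rest is a formal application of Corollary \ref{cor criterion} three times (to $X$, to the generic fiber, and finally to $Y$).
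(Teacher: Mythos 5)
Your overall strategy --- apply Corollary \ref{cor criterion} to $Y$ by producing an $\A^1$-chain homotopy from the generic point of $Y$ to a $k$-rational point --- is the same as the paper's, but the way you try to produce that chain is genuinely different, and it has a gap at precisely the step you yourself flag as ``the technical heart.''

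You attempt to \emph{lift} the $\A^1$-chain homotopy $\eta_X \sim x_0$ on $X$ to a chain on $Y$. To lift a single link $h_i : \P^1_{k(X)} \to X$, you pull back $Y \to X$ along $h_i$ and assert that the resulting family over $\P^1_{k(X)}$ has generic fiber ``(geometrically) the $\A^1$-connected generic fiber of $f$,'' hence a rational point over $k(X)(T)$. This is false in general: the fiber of the pulled-back family over the generic point $\zeta$ of $\P^1_{k(X)}$ is $Y_{h_i(\zeta)} \otimes_{\kappa(h_i(\zeta))} k(X)(T)$, and $h_i(\zeta)$ is the generic point of $X$ only if $h_i$ is dominant. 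The last link $h_r$ is dominant (it hits $\eta_X$), but intermediate links need not be, and the hypothesis gives you \emph{no} control over the fibers of $f$ away from the generic point --- even over smooth fibers, $\A^1$-connectedness (or $R$-triviality) can fail. So your lifting step does not go through as stated, and closing this gap would require an additional argument (e.g.\ that $R$-triviality of smooth proper fibers spreads out over a dense open of the base), which is not in the proposal.

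The paper avoids the lifting problem entirely. Since $Y_\eta$ is $\A^1$-connected it has a $k(\eta)$-point, i.e.\ $f$ has a rational section $s : X \dashrightarrow Y$. Resolve indeterminacy to get $\tilde{s} : \tilde{X} \to Y$ with $\tilde{X} \to X$ a proper birational morphism of smooth varieties. By birational invariance $\tilde{X}$ is $\A^1$-connected, so its generic point is $\A^1$-chain homotopic to a $k$-point $x_0$ (chosen in the isomorphism locus of $\tilde{X} \to X$). Composing with $\tilde{s}$ \emph{pushes this chain forward} into $Y$, connecting $\tilde{s}(\eta) \in Y_\eta(k(\eta))$ to $\tilde{s}(x_0) \in Y(k)$. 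A second chain, inside $Y_\eta$, connects the generic point of $Y$ to $\tilde{s}(\eta)$ using $\A^1$-connectedness of $Y_\eta$. Concatenating the two and applying Corollary \ref{cor criterion} finishes the proof. Replacing ``lift the chain through $f$'' with ``push a chain forward along a section of $f$'' is exactly what makes the argument work without having to say anything about fibers of $f$ over non-generic points.
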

\begin{proof}
Let $\eta$ be the generic point of $X$ and $Y_{\eta}$ be the generic fiber of $f$. Let $\xi$ be the generic point of $Y_{\eta}$. Note that this is also the generic point of $Y$. 

Since $Y_{\eta}$ is $\A^1$-connected, it has a $k(\eta)$-rational point. In other words, the map $f$ has a rational section $X\stackrel{s}{\dashrightarrow}Y$. Since $k$ has characteristic $0$, we may resolve the singularities of the section and find a smooth proper variety $\tilde{X}$ over $k$ and a birational proper morphism $\tilde{X}\xrightarrow{\pi} X$ such that the rational section $s$ defines a morphism $\tilde{X}\xrightarrow{\~s} Y$. 
\[
\xymatrix{
\tilde{X} \ar[rd]_\pi \ar[r]^{\~s} & Y \ar[d]^-f \\
& X\ar@{-->}@/_2pc/[u]_s
}
\]
Since $\tilde{X}$ is birational to $X$ and both $X, \~X$ are smooth, we see that $\A^1$-connectedness of $X$ implies that of $\tilde{X}$ by \cite[Theorem 3]{Asok-crelle}. Thus, its generic point $\eta$ is $\A^1$-chain homotopic to a $k$-rational point $x_0\in X(k)$, which can be chosen to lie in the open set over which $\pi$ is an isomorphism. Note that $\~s(\eta)$ is a $k(\eta)$-rational point of $Y_{\eta}$. Thus by applying $\~s$, we see that $\~s(\eta)$ is $\A^1$-chain homotopic to the $k$-rational point $\~s(x_0)$ in $Y$. However since $Y_\eta$ is an $\A^1$-connected variety over $k(\eta)$, we see that $\~s(\eta)$ is $\A^1$-chain homotopic to its generic point $\xi$, which is also the generic point of $Y$. Thus, the generic point of $Y$ is $\A^1$-chain homotopic to the $k$-rational point $\~s(x_0)$. 
\end{proof}

\begin{remark}
Since $\mathbb A^1$-connectedness is equivalent to universal $R$-triviality, Corollary \ref{cor:fiberbase} is, in a sense, well-known to experts.  Indeed, birational invariance of $R$-equivalence classes in a smooth proper variety has been already observed in \cite[Proposition 10]{Colliot-Thelene-Sansuc-1979} (in characteristic $0$) and in \cite[Corollary 6.6.6]{Kahn-Sujatha} (in arbitrary characteristic).
\end{remark}

\begin{corollary}\label{cor:open}
Let $k$ be a field and $U$ be a variety over $k$, which has a $k$-rational point $x_0\in U(k)$. Assume that for any field extension $F/k$ and any point $x \in U(F)$, there exists a rational map $h:\#P^1_F \dashrightarrow U$, defined at $0,1\in \#P^1_k$ such that
$h(0)=x$ and $h(1)=x_0$. Then any smooth proper model of $U$ is $\A^1$-connected.  
\end{corollary}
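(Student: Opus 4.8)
The plan is to verify, for a smooth proper model $X$ of $U$, the two conditions of Theorem~\ref{theorem:criterion2}. Fix such an $X$, a dense open subscheme $V$ which is open in both $U$ and $X$ (so $k(U)=k(X)=:K$), and, as discussed below, arrange that $x_0\in V$; by birational invariance of $\A^1$-connectedness among smooth proper varieties \cite[Theorem 3]{Asok-crelle} it is enough to treat one such model. Since $X$ is smooth, every non-generic point of $X$ is movable, so condition~(b) of Theorem~\ref{theorem:criterion2} is automatic except at the generic point $\eta_X$. Everything therefore reduces to producing a point $x_0'\in X(k)$ with the same image as $\eta_X$ in $\pi_0^{\A^1}(X)(K)$ --- equivalently, by Corollary~\ref{cor criterion}, to showing that $X$ has a $k$-point to which its generic point is $\A^1$-chain homotopic.

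The main mechanism is the translation of a rational curve in $U$ into an $\A^1$-homotopy in $X$. Given a field extension $F/k$ and a dominant rational map $h\colon\P^1_F\dashrightarrow U$ defined at $0$ and $1$ with $h(0)$ the generic point of $U$, compose with $U\dashrightarrow X$ to obtain a rational map $\P^1_F\dashrightarrow X$; since $\P^1_F$ is a regular curve and $X$ is proper, it extends to a morphism $\bar h\colon\P^1_F\to X$. Because $h(0)$ is the generic point and $V$ is dense, $h$ maps a neighbourhood of $0$ into $V$, so $\bar h(0)=\eta_X$; and if moreover $h(1)\in V$ then $\bar h(1)$ is simply the image of $h(1)$ in $X$. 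Restricting $\bar h$ to the standard copy of $\A^1_F$ through $0$ and $1$ yields an $\A^1$-homotopy of $\Spec F$ in $X$ connecting $\eta_X$ and $\bar h(1)$. Applying this with $F=K$ to the rational map furnished by the hypothesis for the generic point $\eta\in U(K)$ (so that $h(1)=x_0$), and using $x_0\in V$, we obtain that $\eta_X$ and the $k$-rational point $x_0'\in X(k)$ determined by $x_0$ have the same image in $\pi_0^{\A^1}(X)(K)$. This supplies the $k$-point needed in Theorem~\ref{theorem:criterion2} and verifies condition~(a) at $\eta_X$, so $X$ is $\A^1$-connected.

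The step I expect to be the main obstacle is the reduction to the case $x_0\in V$, and with it the basic point that a smooth proper model of $U$ has a $k$-point at all. When $x_0$ is a smooth point of $U$, one shrinks $U$ to a smooth open neighbourhood of $x_0$ --- the hypothesis plainly restricts to dense opens --- and embeds it in a smooth proper model, reducing to the situation above. When $x_0$ lies in the indeterminacy locus of $U\dashrightarrow X$ (for instance when $x_0$ is singular on $U$), the plan is to work on the closure $\Gamma\subseteq U\times X$ of the graph of $U\dashrightarrow X$: lifting the rational curves supplied by the hypothesis along the proper birational morphism $\Gamma\to U$ via the valuative criterion for the regular source curves, $\bar h(1)$ is identified with the image in $X$ of a point of the fibre $\Gamma_{x_0}$, and a $k$-level instance of the hypothesis is used to exhibit a $k$-point of $\Gamma_{x_0}$ --- hence of $X$ --- joined to $\bar h(1)$, and thus to $\eta_X$, by an $\A^1$-chain homotopy. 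It is exactly here that the hypothesis has to be invoked in its full strength, connecting \emph{every} point over \emph{every} extension to the fixed point $x_0$, rather than merely its instance at the generic point.
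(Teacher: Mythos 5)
Your main mechanism — applying the hypothesis to the generic point of $U$ over $F=k(X)$ to get a rational map $\P^1_F\dashrightarrow U$, composing with $U\dashrightarrow X$, extending to a morphism $\bar h\colon\P^1_F\to X$ by properness, and concluding via Corollary~\ref{cor criterion} — is exactly the paper's proof, which is a four-line version of it. You are right to worry about whether $\bar h(1)$ is a $k$-rational point of $X$: the paper glosses over this entirely, writing as if $x_0$ automatically determines a $k$-point of $X$. Your Case~1 treatment (when $x_0$ is a smooth point of $U$: shrink to a smooth open neighbourhood of $x_0$, note the hypothesis restricts to dense opens, and embed as an open subscheme of a smooth proper model so that $x_0\in X(k)$ on the nose) is correct and supplies precisely the justification the paper omits.

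Your Case~2 plan, however, cannot be made to work, because the corollary in fact fails when $x_0$ is singular. Take $k=\Q$, $U=\{x^2+y^2+z^2=0\}\subset\A^3_\Q$, and $x_0$ the cone vertex — the only $\Q$-point of $U$. The hypothesis holds: for any $x\in U(F)$, the line $h(t)=(1-t)\cdot x$ is a morphism $\A^1_F\to U$ with $h(0)=x$ and $h(1)=x_0$. Yet $U$ is birational to $\P^1_\Q\times_\Q C$, where $C\subset\P^2_\Q$ is the pointless conic $x^2+y^2+z^2=0$; this smooth proper model has no $\Q$-point and so is certainly not $\A^1$-connected. This is exactly where your Case~2 strategy must break: here $\Gamma_{x_0}$ contains $\{0\}\times C$, which is $\Q$-pointless, so no ``$k$-level instance of the hypothesis'' can produce a $k$-point of $\Gamma_{x_0}$. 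The corollary (and with it the paper's proof and yours) needs the additional hypothesis that $x_0$ be a smooth point of $U$, in which case your Case~1 reduction completes the argument.
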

\begin{proof}
Let $X$ be a smooth proper model of $U$ and $F=k(X)$ be its function field. Let $\eta \in U(F)$ be the generic point.  By hypothesis, we get a morphism $\#P^1\dashrightarrow X$ connecting $x$ to $x_0$. Since $X$ is proper, this map extends to give a morphism, which in turn gives an $\A^1$-homotopy connecting $x$ to $x_0$.  Theorem \ref{cor criterion} now implies that $X$ is $\A^1$-connected.
\end{proof}

\section{Compactifications of symmetric products of \texorpdfstring{$\A^1$}{A1}-connected varieties}
\label{section symmetric}

The aim of this section is to show that any smooth proper compactification of the symmetric power of an $\A^1$-connected smooth projective variety is $\A^1$-connected.  The proof of this result is quite tricky and relies on two geometric inputs.  The first one is that any point on a rational smooth proper variety can be moved by a rational curve into an open subset that is isomorphic to the open subset of an affine space.  The proof of this result, which uses jet schemes, is discussed in Section \ref{subsection approximation}.  The second input, discussed in Section \ref{subsection symmetric powers}, is the construction of a particular $\A^1$-connected smooth proper compactification of the symmetric power of an $\A^1$-connected smooth projective variety.  The $\A^1$-connectedness of any smooth proper compactification of the symmetric power of an $\A^1$-connected smooth projective variety then follows from birational invariance of $\A^1$-connectedness for smooth proper varieties.  

Throughout this section, we will assume that $k$ is an algebraically closed field of characteristic $0$.

\subsection{Rational approximation of curves}
\label{subsection approximation}

Let $t$ be a variable and for any $n \geq 0$, set $\#D_n:= \Spec k[[t]]/\<t^{n+1}\>$.  Let $\#D_{\infty}$ denote the scheme $\Spec k[[t]]$.  For any variety $X$ over $k$ and any non-negative integer $n$, let $J_n(X)$ denote the sheaf $U \mapsto X(U \times \#D_n)$.  It is well-known that (see \cite{Greenberg-schemata1}) for any finite $n$, the sheaf $J_n(X)$ is a scheme and is called the \emph{$m$-jet scheme} of $X$. 

\begin{definition}
\label{defn parameterized pointed curve}
A \emph{parametrized pointed curve} is defined to be a triple $(C, c, \pi)$ where $C$ is a smooth curve, $c$ is a point of $C$ and $\pi$ is a uniformizing parameter of the ring $\@O_{C,c}$. There exists a unique isomorphism $\widehat{\@O}_{C,c} \xrightarrow{\simeq} \kappa(c)[[t]]$ such that $\pi \mapsto t$. Thus, if $(X,x)$ is a variety, a morphism $f: (C,c) \to (X,x)$ determines a $\kappa(c)$-valued point of $J_{\infty}(X)$ (which depends on $\pi$), which we denote by $f_{\infty}$. The image of $f_{\infty}$ in $J_n(X)$ will be denoted by $f_n$ for any non-negative integer $n$. 
\end{definition}

\begin{definition}
\label{defn approximation}
We say that a variety $X$ over $k$ \emph{admits rational approximation of curves} if it has the property that for any point $x$ of $X$, non-negative integer $n$, parametrized pointed curve $(C, c, \pi)$ and morphism $f: (C,c) \to (X,x)$, there exists a morphism $g: (V,0) \to (X,x)$ for some open subset $U$ of $\A^1$ containing $0$ such that $f_n = g_n$ (here we use the parameter $T$ on $V \subset \#A^1 = \Spec k[T]$).  We say that $g$ approximates $f$ to order $n$.  We will abuse the notation and call such a $g$ a \emph{pointed rational curve} $(\#A^1, 0) \to (X,x)$.
\end{definition}

It is easy to see that the property of admitting rational approximation of curves is respected by blow-downs.  Lemma \ref{lemma rational approximation} below shows that this property is also respected under blowups at smooth centers.
 
\begin{lemma}
\label{lemma rational approximation}
Let $X$ be a smooth variety which admits rational approximation of curves.  If $\~X$ is a blowup of $X$ at a smooth center $Z$, then $\~X$ too admits rational approximation of curves. 
\end{lemma}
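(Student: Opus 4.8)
The plan is to reduce the statement for $\widetilde{X}$ to the corresponding property for $X$ by lifting curves and their approximations across the blowup morphism $p \colon \widetilde{X} \to X$. Fix a point $\widetilde{x} \in \widetilde{X}$, a non-negative integer $n$, a parametrized pointed curve $(C, c, \pi)$ and a morphism $\widetilde{f} \colon (C, c) \to (\widetilde{X}, \widetilde{x})$; set $x := p(\widetilde{x})$ and $f := p \circ \widetilde{f} \colon (C, c) \to (X, x)$. Since $X$ admits rational approximation of curves, for a suitable order $m \geq n$ (to be chosen large enough, see below) there is a pointed rational curve $g \colon (\A^1, 0) \to (X, x)$ with $f_m = g_m$, i.e. $f$ and $g$ agree as $m$-jets at the marked points after identifying the completed local rings with $\kappa(c)[[t]]$ via $\pi$ and with $k[[T]]$ via $T$. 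The heart of the argument is to lift $g$ through the blowup to a pointed rational curve $\widetilde{g} \colon (\A^1, 0) \to (\widetilde{X}, \widetilde{x})$ with $\widetilde{f}_n = \widetilde{g}_n$.

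The key observation is that a lift of a curve germ across $p$ is governed by finitely much jet data. If $\widetilde{f}(C) \not\subseteq E := p^{-1}(Z)$, then the generic point of $C$ maps outside $E$, and $\widetilde{f}$ is the strict transform of $f$; concretely, near $c$ the morphism $\widetilde{f}$ is determined by $f$ together with the order of contact of $f$ with $Z$ along the normal directions, equivalently by the image of $f_\infty$ in $J_\infty(X)$ relative to the ideal sheaf of $Z$. Because the strict transform of a germ depends only on a finite jet of that germ — the order to which it meets $Z$, which one may bound once $\widetilde{f}$ and hence the contact order of $f$ with $Z$ is fixed — there is an integer $m = m(n, \widetilde{f})$ such that any two curves through $x$ with the same $m$-jet as $f$ have strict transforms through $\widetilde{x}$ agreeing as $n$-jets at the marked point. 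Applying this to $g$ (which has the same $m$-jet as $f$), its strict transform $\widetilde{g}$ passes through $\widetilde{x}$ and satisfies $\widetilde{g}_n = \widetilde{f}_n$. Since $g$ is a pointed rational curve, its strict transform $\widetilde{g}$ is again rational and is defined on an open subset of $\A^1$ containing $0$, so it is a pointed rational curve $(\A^1, 0) \to (\widetilde{X}, \widetilde{x})$. The remaining case $\widetilde{f}(C) \subseteq E$ is handled by an induction on $\dim X$: here $\widetilde{f}$ factors through the exceptional divisor $E$, which is a projective bundle $\P(N_{Z/X}) \to Z$ over the smooth variety $Z$; one checks that $E$ itself admits rational approximation of curves (a projective bundle over a base with this property has it, using local triviality and the fact that $\P^r$ and products behave well), and that approximating inside $E$ suffices because the inclusion $E \hookrightarrow \widetilde{X}$ is a closed immersion, so agreement of $n$-jets in $E$ gives agreement of $n$-jets in $\widetilde{X}$.

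The main obstacle I expect is making precise and uniform the claim that the strict transform of a curve germ through $\widetilde{x}$ depends on only a finite jet of the germ downstairs, with the requisite order $m$ depending only on $n$ and the chosen lift $\widetilde{f}$ (not on $g$). This is essentially a statement about the blowup map on jet spaces: the morphism $J_m(\widetilde{X}) \to J_m(X)$ induced by $p$, restricted to the locus of jets not entirely inside $E$, is an isomorphism onto its image away from the jets tangent to $Z$ to high order, and one needs to track how the order of tangency of $\widetilde{f}$ to $E$ controls the loss of jet order under pushforward and pullback. Concretely one can work in local coordinates $(x_1, \dots, x_d)$ on $X$ in which $Z = \{x_1 = \cdots = x_c = 0\}$, write $f$ in terms of power series $x_i(t)$, and verify that the strict transform's $n$-jet is a function of the $m$-jets $x_i(t) \bmod t^{m+1}$ for $m$ roughly $n$ plus the minimum of the $t$-orders of $x_1(t), \dots, x_c(t)$ — an explicit but somewhat delicate bookkeeping that I would carry out in local coordinates. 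Once this uniform dependence is established, the rest of the argument is formal, using only that $g$ matches $f$ to the required order and that strict transforms of rational curves are rational.
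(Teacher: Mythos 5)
Your main argument matches the paper's approach exactly: push $\widetilde f$ down to $f := p\circ \widetilde f$ on $X$, approximate $f$ by a pointed rational curve $g$ to a sufficiently high order, and show that the strict transform $\widetilde g$ of $g$ approximates $\widetilde f$ to order $n$. The ``delicate bookkeeping'' you defer is precisely what the paper carries out: choosing a regular system of parameters $\pi_1,\dots,\pi_d$ at $x$ with $Z = \{\pi_1=\cdots=\pi_p=0\}$, it sets $m := \min_{1\leq i\leq p}\ord_t\bigl(f_\infty^*(\pi_i)\bigr)$ and shows, working in the chart $T_1\neq 0$ of the blowup, that agreement of $f$ and $g$ to order $n+m$ forces agreement of $\widetilde f$ and $\widetilde g$ to order $n$. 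So the heart of your proposal is the same and would work once that computation is supplied.

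Where your proposal goes wrong is the subcase $\widetilde f(C)\subseteq E$. You propose to conclude because $E \cong \P(N_{Z/X})$ is a projective bundle over $Z$ and ``a projective bundle over a base with this property has it'' -- but the base is $Z$, an arbitrary smooth center, and there is no hypothesis (and no reason) that $Z$ admits rational approximation of curves. An induction on $\dim X$ does not supply this either, since a smooth subvariety of a variety admitting rational approximation of curves need not itself admit it. So this step is a genuine gap. A fix that avoids any hypothesis on $Z$ is to perturb: in \'etale local coordinates near $\widetilde x$ with $E=\{z_1=0\}$, replace the $\infty$-jet of $\widetilde f$ by one whose $z_1$-component is $t^{n+1}$ (leaving the $n$-jet unchanged), realize this jet by a smooth curve germ $\widetilde f'$ not contained in $E$, and apply the already-treated case to $\widetilde f'$; any rational $\widetilde g$ with $\widetilde g_n = \widetilde f'_n$ also satisfies $\widetilde g_n = \widetilde f_n$. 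It is worth noting that the paper itself tacitly restricts to the case $\widetilde f(C)\not\subseteq E$ (its quantity $m$ becomes infinite when $f$ maps $C$ into $Z$), so you were right to single this case out; it is only your proposed resolution that fails.
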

\begin{proof}
We fix a point $x$ of $X$. Let $n$ be a non-negative integer. We prove that for any parametrized pointed curve $(C,c, \pi)$ and morphism $f: (C,c) \to (X,x)$, there exists an integer $m$ such that if $g:(\#A^1, 0) \to (X,x)$ approximates $f$ to order $n + m$, then the lift of $g$ to $\~X$ approximates the lift of $f$ to order $n$. Clearly, this will imply the result. Without any loss of generality, we will assume that $\kappa(c) = \kappa(x) = k$. 

If $x \notin Z$, the result is trivial. Thus, we now assume that $x$ is in $Z$. Choose a system of parameters $\pi_1, \ldots, \pi_d$ at $x$ (where $d = \dim(X)$). By replacing $X$ with an open neighbourhood if necessary, we may assume that the $\pi_i$ are regular on $X$ and that $Z$ is defined by the ideal $\<\pi_1, \ldots, \pi_p\>$ for some $p \leq d$. Thus, $\~X$ is the closed subscheme of $X \times \#P^{p-1}$ defined by the equations $T_i \pi_j = T_j \pi_i$ for $1 \leq i,j \leq p$, where $T_1, \ldots, T_p$ denote homogeneous coordinates on $\#P^{p-1}$. 

For $1 \leq i \leq d$, let $\alpha_i = f_{\infty}^*(\pi_i)$.  Let $i_0$ be such that $\ord_t(\alpha_{i_0}) \leq \ord_t (\alpha_i)$ for $1 \leq i \leq p$.  Without loss of generality, we may take $i_0 = 1$.  We set $m:= \ord_{t}(\alpha_1)$.  Since $X$ admits rational approximation of curves, there exists a pointed rational curve $g: (\#A^1,0) \to (X,x)$, which approximates $f$ to order $n + m$.  For $1 \leq i \leq d$, let $\beta_i = g_{\infty}^*(\pi_i)$. Since $g$ approximates $f$ to order $n + m$, we have $\alpha_i \equiv \beta_i \mod(t^{m + n + 1})$ for all $i$. Thus, $\ord_t(\beta_1) = m \leq \ord_t(\beta_i)$ for $1 \leq i \leq p$.  

Let $\~f$ and $\~g$ denote the lifts of $f$ and $g$ to $\~X$. Then $\~f$ maps the closed point of $\#D$ to a point $\~x$ in the open subset of $\~X$ defined by $T_1 \neq 0$.  Let us denote the generic point of $\#D$ by $\eta_0$ and the closed point by $\eta_1$.  Then 
\[
\~f(\eta_0) = (f(\eta_0), (\alpha_1:  \cdots : \alpha_p))  = (f(\eta_0), (\gamma_1:  \cdots : \gamma_p))\in (X \times \#P^{p-1})(k((t))) \text{,}
\]
where $\gamma_i = \alpha_i/\alpha_1 \in k[[t]] \subset k((t))$.  We also have
\[
\~f(\eta_1) = (f(\eta_1), (\gamma_1(0):  \cdots : \gamma_p(0)) \in (X \times \#P^{p-1})(k),
\]
where $\gamma_i(0)$ is the image of $\gamma_i$ under the quotient homomorphism $k[[t]] \to k[[t]]/\<t\> = k$. As $\gamma_1 = 1$, the point $\~f(\eta_1)$ lies in the open subscheme of $\~X$ defined by the condition $T_1 \neq 0$.  Similarly, we have 
\[
\~g(\eta_0) = (g(\eta_0), (\beta_1:  \cdots : \beta_p)) (g(\eta_0), (\delta_1:  \cdots : \delta_p))\in (X \times \#P^{p-1})(k((t))),
\]
where $\delta_i = \beta_i/\beta_1 \in k[[t]]$. Also, 
\[
\~g(\eta_1) = (g(\eta_1), (\delta_1(0):  \cdots : \delta_p(0)) \in (X \times \#P^{p-1})(k).
\]
Again, we observe that $\delta_{1} = 1$ and so the point $\~g(\eta_1)$ lies in the open subcheme $T_1 \neq 0$. 

In fact, since $\gamma_i \equiv \delta_i \mod(t^{n+1})$ for all $i$, we have $\~f(\eta_1) = \~g(\eta_1)$.  At the point $\~f(\eta_1)$, the functions 
\[
\pi_1, T_2/T_1, \ldots, T_p/T_1, \pi_{p+1}, \ldots , \pi_d
\]
form a system of parameters.  Since $\alpha_1  \equiv  \beta_1  \mod(t^{m+n+1})$, $\gamma_i  \equiv  \delta_i \mod(t^{n+1})$ for $2 \leq i \leq p$ and $\alpha_j  \equiv  \delta_j \mod(t^{m+n+1})$ for $p \leq j \leq d$, we conclude that $\~g$ approximates $\~f$ to order $n$. 
\end{proof}

\begin{proposition}
\label{prop rational approximation}
Any smooth rational variety admits rational approximation of curves. 
\end{proposition}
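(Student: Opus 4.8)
The plan is to reduce to the case $X = \P^d$ (where $d = \dim X$), where rational approximation of curves is immediate, and then to transport the property back to a general $X$ along a factorization of the relevant birational map into blowups and blow-downs, using Lemma \ref{lemma rational approximation} for the blowups and the already-noted stability under blow-downs. I would first dispose of the base case $X = \P^d$. Given a point $x$, an integer $n \geq 0$, a parametrized pointed curve $(C,c,\pi)$ and a morphism $f : (C,c) \to (\P^d, x)$: since $k$ is algebraically closed, $c$ is a closed point with $\kappa(c) = \kappa(x) = k$, so we may assume that $x$ lies in a standard affine chart $\A^d \subseteq \P^d$ and that $f$ factors through this chart. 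Under the isomorphism $\widehat{\@O}_{C,c} \xrightarrow{\simeq} k[[t]]$ of Definition \ref{defn parameterized pointed curve}, the morphism $f$ is given by power series $\alpha_1(t), \dots, \alpha_d(t) \in k[[t]]$; truncating these modulo $t^{n+1}$ to polynomials $\beta_i(T)$ yields a morphism $g : \A^1 \to \A^d \subseteq \P^d$, $T \mapsto (\beta_1(T), \dots, \beta_d(T))$, with $g(0) = x$ and $g_n = f_n$, so $g$ approximates $f$ to order $n$.

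Next I would reduce the general case to the case in which $X$ is smooth and proper. The property of admitting rational approximation of curves passes to open subvarieties: given a curve germ in an open $U \subseteq X$ and an order-$n$ approximation $g : (V,0) \to (X,x)$ of its composite into $X$, the restriction of $g$ to the open neighbourhood $g^{-1}(U) \ni 0$ of $0$ still approximates it to order $n$, because the $n$-jet of $g$ depends only on its germ at $0$ and $J_n(U) \hookrightarrow J_n(X)$ is an open immersion. Since $\Char k = 0$, I can embed $X$ as a dense open subscheme of a smooth proper variety $\-X$ — take any proper compactification and resolve its singularities, which in characteristic $0$ can be done without altering the regular locus — and $\-X$ is again rational; by the previous remark it then suffices to treat $\-X$. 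So we may assume $X$ is smooth and proper.

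Since $X$ and $\P^d$ are now birational smooth proper varieties over a field of characteristic $0$, the weak factorization theorem for birational maps of smooth complete varieties supplies a chain of smooth proper varieties
\[
\P^d = V_0 \dashrightarrow V_1 \dashrightarrow \cdots \dashrightarrow V_\ell = X
\]
in which each step $V_i \dashrightarrow V_{i+1}$ is either the blowup of $V_i$ along a smooth centre or the inverse of the blowup of $V_{i+1}$ along a smooth centre. I would then induct on $i$: $V_0 = \P^d$ admits rational approximation of curves by the base case; if $V_{i+1}$ is the blowup of $V_i$ along a smooth centre, Lemma \ref{lemma rational approximation} gives the property for $V_{i+1}$; and if $V_i$ is the blowup of $V_{i+1}$ along a smooth centre, the property for $V_{i+1}$ follows from its stability under blow-downs. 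Taking $i = \ell$ gives the proposition.

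I expect the last step to be the crux, and the real obstacle is why one is forced to route through weak factorization there rather than through something more hands-on. It is tempting to simply resolve the indeterminacy of a birational map $\P^d \dashrightarrow X$, obtain a morphism $q : W \to X$ with $W$ smooth and built from $\P^d$ by blowups at smooth centres (hence having the property), and push approximating curves down along $q$. But a proper birational morphism of smooth varieties need not be a composition of blow-downs of smooth centres, and a curve germ in $X$ whose arc lies in the locus over which $q$ fails to be an isomorphism — a locus that may well be singular and positive-dimensional — cannot be lifted to $W$ by the valuative criterion. Weak factorization, legitimate here because $\Char k = 0$, is precisely what circumvents this difficulty, at the price of first passing to a smooth proper model of $X$.
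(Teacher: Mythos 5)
Your proof is correct and follows the same route as the paper's, which is extremely terse (it simply asserts the base case for $\P^d$ is clear and then invokes Lemma~\ref{lemma rational approximation} together with the weak factorization theorem of \cite{AKMW}). You fill in three pieces that the paper leaves implicit, all correctly: an explicit verification that $\P^d$ admits rational approximation of curves by truncating the formal power series defining the arc; the observation that the property is inherited by open subvarieties, which is actually needed to reduce to the smooth \emph{proper} case so that the weak factorization theorem applies as stated; and the induction along the factorization chain, using Lemma~\ref{lemma rational approximation} for the blowup steps and the paper's remark that the property is preserved under blow-downs for the inverse steps. This is a faithful unpacking of the paper's argument rather than a different proof. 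Your closing discussion of why resolving the indeterminacy of a single map $\P^d \dashrightarrow X$ is not obviously sufficient is a reasonable heuristic (the issue is that a positive-dimensional but non-generic point $\xi = f(\xi_C)$ can lie in the codimension $\geq 2$ locus where the inverse rational map $X \dashrightarrow W$ is undefined, and the fiber $q^{-1}(\xi)$ need not acquire a $\kappa(\xi_C)$-rational point without passing to a finite extension of the curve's function field); in any case it does not affect the validity of the proof you actually give.
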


\begin{proof}
It is clear that the projective space admits rational approximation of curves.  Now, the result follows from Lemma \ref{lemma rational approximation} and the weak factorization theorem \cite[Theorem 0.1.1]{AKMW}. 
\end{proof}

\begin{corollary}
Let $X$ be a smooth projective variety and $U \subset X$ be an open subset. Then, for any point $x \in X \backslash U$, there exists a rational curve through $x$ which intersects $U$. 
\end{corollary}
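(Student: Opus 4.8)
The plan is to build a rational curve through $x$ by applying Proposition~\ref{prop rational approximation} to a carefully chosen smooth pointed curve, while tracking the order of contact of that curve with the closed complement $Z := X \setminus U$. First I would produce a smooth pointed curve that already meets $U$. Choose a point $u \in U$ distinct from $x$; since any two points of an irreducible projective variety lie on a common irreducible curve, there is an irreducible curve $C_0 \subset X$ containing both $x$ and $u$, and $C_0$ is not contained in $Z$ because $u \in C_0 \cap U$. Let $\nu \colon C \to C_0$ be the normalization, pick a point $c \in \nu^{-1}(x)$ and a uniformizer $\pi$ of the discrete valuation ring $\mathcal{O}_{C,c}$. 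Then $f \colon C \xrightarrow{\nu} C_0 \hookrightarrow X$ is a non-constant morphism of a smooth curve with $f(c) = x$ whose generic point is carried into $U$, so $(C,c,\pi)$ is a parametrized pointed curve and $f$ a morphism $(C,c) \to (X,x)$ in the sense of Definition~\ref{defn parameterized pointed curve}.

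Next, let $\mathcal{I}_Z \subset \mathcal{O}_X$ be the ideal sheaf of $Z$, and let $\mathcal{J} \subset \mathcal{O}_C$ be the ideal sheaf generated by its pullback along $f$. The stalk $\mathcal{J}_c$ is an ideal of $\mathcal{O}_{C,c}$, hence of the form $\langle \pi^m \rangle$ for a unique $m$; here $m \geq 1$ since $f(c) = x \in Z$, and $m$ is finite since the generic point of $C$ maps into $U = X \setminus Z$. As $X$ admits rational approximation of curves by Proposition~\ref{prop rational approximation}, I would invoke it with $n := m$ to obtain an open neighbourhood $V$ of $0$ in $\A^1$ and a morphism $g \colon (V,0) \to (X,x)$ with $f_m = g_m$ in $J_m(X)$, the parameter on $V$ being its coordinate $T$.

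To conclude: the jets $f_m$ and $g_m$ are the same morphism $\Spec k[t]/\langle t^{m+1}\rangle \to X$ under the chosen parameters, so the ideal of $k[t]/\langle t^{m+1}\rangle$ generated by the pullback of $\mathcal{I}_Z$ is the same whether computed through $f$ or through $g$. Through $f$ it is the reduction of $\mathcal{J}_c = \langle \pi^m\rangle$ modulo $t^{m+1}$, namely the ideal $\langle t^m\rangle$, which is nonzero precisely because $m$ is finite. Hence the stalk at $0$ of the pullback of $\mathcal{I}_Z$ along $g$ — an ideal of the discrete valuation ring $\mathcal{O}_{V,0}$ — reduces modulo $T^{m+1}$ to $\langle T^m\rangle$ and is therefore equal to $\langle T^m\rangle$. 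This shows simultaneously that $g$ is non-constant (a constant map with value $x \in Z$ would pull $\mathcal{I}_Z$ back to the zero ideal) and that $g^{-1}(Z)$ equals $\{0\}$ in a neighbourhood of $0$, so a punctured neighbourhood of $0$ in $V$ is mapped into $U$. The closure of $g(V)$ in $X$ is then an irreducible curve through $x = g(0)$ that meets $U$, and it is rational by Lüroth's theorem, being dominated by the open subset $V$ of $\A^1$.

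The step I expect to be the main obstacle is the passage from $f$ to $g$: one must recognise that the naive order of contact $m$ of the original curve with $Z$ is already visible in the $m$-jet, so that taking $n = m$ in the rational approximation is exactly enough to force $g$ to leave $Z$ as well. The remaining ingredients — the existence of $C_0$, the normalization, and the Lüroth step — are standard. (As in Proposition~\ref{prop rational approximation}, this argument uses that $X$ admits rational approximation of curves, which holds whenever $X$ is moreover rational.)
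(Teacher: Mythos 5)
Your proof is correct and follows essentially the same approach as the paper: produce a smooth pointed curve $(C,c)\to(X,x)$ meeting $U$, observe that its jet at $c$ leaves $J_n(Z)$ for some finite $n$, and invoke Proposition~\ref{prop rational approximation} to replace it by a pointed rational curve with the same $n$-jet, which must therefore also escape $Z$. The only difference is one of bookkeeping: the paper argues by contradiction that such an $n$ exists (via $f_\infty \notin J_\infty(Z)$), whereas you pin it down as the contact order $m$ of $f^*\mathcal{I}_Z$ at $c$ and verify directly that the approximating curve has the same contact order, but this is the same idea made more explicit. (Your closing parenthetical correctly notes that the hypothesis of rationality of $X$, implicit in the paper's statement via Proposition~\ref{prop rational approximation}, is needed.)
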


\begin{proof}
Let $Z = X \backslash U$. Choose a pointed pointed parametrized curve $(C,c,\pi)$ with $\kappa(c) = \kappa(x)$, and a morphism $f: (C,c) \to (X,x)$ such that $f(C)$ intersects $U$. If we have $f_n \in J_n(Z)(\kappa(x))$ for all $n \geq 0$, then we must have $f_{\infty} \in J_{\infty}(Z)$. Thus, $f$ must map the generic point of $C$ into $Z$, and hence $f(C) \subset Z$, which is a contradiction. Thus, there exists some  positive integer $n$ such that $f_n \notin J_n(Z)(\kappa(x))$. Since $X$ admits rational approximation of curves, there exists a pointed rational curve $g: (\#A^1,0) \to (X,x)$ which approximates $f$ to order $n$. Then, $g(\#A^1)$ must also intersect $U$. 
\end{proof}

\subsection{\texorpdfstring{$\A^1$}{A1}-connectedness of symmetric powers}
\label{subsection symmetric powers}

\begin{notation}\label{notation}
For an integer $m\geq 2$, we let $\Sym^m X$ denote the $m$-th symmetric power of $X$, that is, the quotient of $X^m$ by the action of the symmetric group $S_m$.  Note that $\Sym^m X$ is singular in general.  Let $\Delta$ denote the union of all the partial diagonals of $X^m$.  The geometric quotient $U:= (X^m \setminus \Delta)/S_m$ is a smooth scheme.  We let $\eta$, $\xi$ and $\theta$ denote the generic points of $X$, $X^m$ and $\Sym^m X$, respectively. Let $K:=k(\eta)$ and $E:=k(\xi)$ and $L:=k(\theta)$.  Since $X$ is geometrically integral, it follows from the Grothendieck-Sharp theorem (see  \cite[Remarque (4.2.1.4)]{EGA4-IV}, \cite[Theorem]{Sharp}) that $\Spec K^{\otimes m}$ is an integral domain and its function field is isomorphic to $E$.  

Let $F$ be any field extension of $k$ in which $k$ is algebraically closed.  Let $F'$ denote the function field of $\Spec F^{\otimes m}$.  For any $F$-rational point $\Spec F \xrightarrow{p} X$, we denote by $p^m$ the corresponding $\Spec F'$-valued point of $X^m$. 
\end{notation}

Given an $\A^1$-connected smooth projective variety $X$ over $k$, we first construct an explicit smooth proper compactification of $\Sym^m X$, which is $\A^1$-connected.  This compactification will be constructed by first considering a particular blowup $\~X$ of $X$ and then taking a suitable compactification of $\Sym^m \~X$.  Lemma \ref{lemma pre-arrange-x} below gives the required $\~X$.

\begin{lemma}
\label{lemma pre-arrange-x} 
Let $X$ be a smooth projective $\A^1$-connected variety over an algebraically closed field $k$ of characteristic $0$.  Let $x_0\in X(k)$. Let $M$ be a finitely generated field extension of $k$ and let $C$ be a smooth curve over $M$ with a point $0\in C(M)$. Let $f: C \to X$ be a morphism with $f(0)=x_0$.  Further, assume that $k$ is algebraically closed in $M$ and that the image of generic point of $C$ in $X$ is a point of dimension $\geq 2$.  Then there exists a proper birational morphism $X'\xrightarrow{\alpha} X$ such that 
\begin{enumerate}[label=$(\arabic*)$]
\item $\alpha$ is obtained by successively blowing up $k$-rational points lying over $x_0$;
\item $x'_0:=f'(0)$ is a $k$-rational point, where $f'$ is the lift of $f$ to $X'$; and
\item if $\tilde{X}\xrightarrow{\pi} X'$ is the blowup of $X'$ at $x'_0$, then $\tilde{f}(0)$ is a positive-dimensional point of $\tilde{X}$, where $\tilde{f}$ is the lift of $f'$ to $\tilde{X}$. 
\end{enumerate}
\end{lemma}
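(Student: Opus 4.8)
The plan is to build $X'$ by a greedy sequence of point blow‑ups along the formal arc that $f$ cuts out at $0$, stopping the moment the image of $0$ ceases to be $k$‑rational; the real content is in showing that this procedure terminates.

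Since $0\in C(M)$ and $C$ is a smooth curve over $M$, we have $\widehat{\@O}_{C,0}\cong M[[t]]$ for a uniformizing parameter $t$, and $f$ induces a formal arc $\widehat f\colon\Spec M[[t]]\to X$ sending the closed point to $x_0$. I construct a tower $X=X_0\leftarrow X_1\leftarrow\cdots$ inductively: given $X_N$, the lift $f^{(N)}\colon C\to X_N$ of $f$ (which exists since $C$ is a smooth curve and $X_N\to X_{N-1}$ is proper, by the valuative criterion), and the point $x_0^{(N)}:=f^{(N)}(0)$, I stop if $x_0^{(N)}$ is not $k$‑rational; otherwise $x_0^{(N)}$ is a closed point lying over $x_0$ and I set $X_{N+1}:=\mathrm{Bl}_{x_0^{(N)}}X_N$. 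If the procedure stops at stage $N+1$, then $X':=X_N$, the composite $\alpha\colon X'\to X$, $x_0':=x_0^{(N)}$, $\~X:=X_{N+1}$ and $\~f:=f^{(N+1)}$ satisfy $(1)$--$(3)$: a non‑$k$‑rational point of a finite type scheme over the algebraically closed $k$ is automatically positive‑dimensional. So it suffices to prove that $x_0^{(N)}$ is \emph{not} $k$‑rational for some $N\geq 1$.

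Suppose for contradiction that every $x_0^{(N)}$ is $k$‑rational; note $f$ is non‑constant, else the image of $\eta_C$ would be the $0$‑dimensional point $x_0$. At each stage pick a regular system of parameters $u_1^{(N)},\dots,u_d^{(N)}$ at $x_0^{(N)}$ ($X_N$ is smooth, $d=\dim X$), put $\phi_i^{(N)}(t):=\widehat f^{\,*}(u_i^{(N)})\in tM[[t]]$, and let $e_N:=\min_i\ord_t\phi_i^{(N)}$, a positive integer. Passing to the chart of the blow‑up centred on a coordinate of minimal order gives $e_{N+1}\leq e_N$, so $(e_N)$ stabilises, to $e$ say from stage $N_0$ on; relabel so $\ord_t\phi_1^{(N_0)}=e$ and keep $1$ as the pivot at every stage $\geq N_0$. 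Then $\phi_1^{(N)}=\phi_1^{(N_0)}=:\sigma(t)$ for all $N\geq N_0$, while for $i\geq 2$ one has $\phi_i^{(N+1)}=\phi_i^{(N)}/\sigma-a_i^{(N)}$ with $a_i^{(N)}\in k$ — this is where $k$‑rationality of $x_0^{(N+1)}$ enters, $a_i^{(N)}$ being the value at $t=0$ of $\phi_i^{(N)}/\sigma$. Telescoping, and using $\ord_t\!\big(\sigma^m\phi_i^{(N_0+m)}\big)\geq me+1\to\infty$, we get power series $\psi_i\in k[[s]]$ with $\phi_i^{(N_0)}(t)=\psi_i(\sigma(t))$ for all $i$.

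The contradiction — the heart of the argument — is as follows. The $\phi_i^{(N_0)}(t)$ are pull‑backs of regular functions along $f^{(N_0)}\colon C\to X_{N_0}$, hence lie in $M(C)$; and $\sigma(t)\in M(C)$ is transcendental over $M$, so $M(C)$ is \emph{finite} over $M(\sigma(t))$. Thus each $\psi_i(\sigma(t))$ is algebraic over $M(\sigma(t))$; clearing denominators in an algebraic relation and using that $s\mapsto\sigma(t)$ gives an injective map $M[[s]]\hookrightarrow M[[t]]$, we conclude $\psi_i(s)$ is algebraic over $M(s)$. But $\psi_i(s)\in k[[s]]$, and since $k$ is algebraically closed $M(s)/k(s)$ is a regular extension, so an element of $k[[s]]$ algebraic over $M(s)$ is already algebraic over $k(s)$ (otherwise $M(s)$ and $k(s)(\psi_i(s))$ would be linearly disjoint over $k(s)$, forcing transcendence over $M(s)$). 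As the image in $M[[t]]$ of any element of $\@O_{X_{N_0},x_0^{(N_0)}}$ is $G(\psi_1(\sigma(t)),\dots,\psi_d(\sigma(t)))$ for some $G\in k[[u_1,\dots,u_d]]$, i.e.\ a power series in $\sigma(t)$ over $k$, the same reasoning shows every such element is algebraic over $k(\sigma(t))$; hence the field it generates over $k$, which is $\kappa(f(\eta_C))$, is algebraic over $k(\sigma(t))$, so the image of the generic point of $C$ has dimension $\operatorname{trdeg}_k k(\sigma(t))=1$, contrary to hypothesis. The main obstacle is precisely this last step: one must realise that a general formal arc centred at a $k$‑point can have all its iterated blow‑up centres $k$‑rational while remaining ``fat'' (e.g.\ $u\mapsto s,\ v\mapsto e^{s}-1$ on $\A^{2}$), so the finiteness of $M(C)$ over $M(\sigma(t))$, together with regularity of $M(s)/k(s)$, is genuinely needed; the valuative‑criterion lifting, the blow‑up coordinate changes, and the telescoping are all routine.
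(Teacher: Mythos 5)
Your proof is correct, and it takes a genuinely different route from the paper's. Both proofs run the same greedy process -- keep blowing up $k$-rational centers along the arc until the center stops being $k$-rational -- so the whole content is termination. The paper establishes termination in two steps: first it shows, by spreading $f$ over a base $B$ with $\Spec M \to B$ generic and observing that an immersion near the section $s(U)$ cannot collapse $s(U)$ (which has dimension $\geq 1$ precisely because $\operatorname{trdeg}_k M(C) \geq 2$) to the single point $x_0$, that the curve $f_M(C) \subset X_M$ cannot be normal at $f_M(0)$; then it invokes embedded resolution of curve singularities by iterated point blow-ups as a black box to force the strict transform to eventually become regular, at which stage the first step yields a non-$k$-rational center. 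You instead analyze the completed arc $\widehat{\@O}_{C,0}\cong M[[t]]$ directly: pivoting on a fixed coordinate of minimal, eventually stable order $e$, the assumption that every center is $k$-rational makes the correction terms $a_i^{(N)}$ lie in $k$, so the telescoped coordinates become $k$-power series in the single element $\sigma=\phi_1^{(N_0)}\in M(C)$; finiteness of $M(C)/M(\sigma)$ together with regularity of $M(s)/k(s)$ (here $k$ algebraically closed in $M$, characteristic $0$) then pins every image element down to being algebraic over $k(\sigma)$, giving $\operatorname{trdeg}_k \kappa(f(\eta_C)) \leq 1$ and a contradiction. Your route is more self-contained -- it avoids the embedded-resolution input entirely, and it makes the use of the two hypotheses (image of $\eta_C$ has dimension $\geq 2$, and $k$ algebraically closed in $M$) completely explicit via the transcendence-degree count and the linear-disjointness step -- at the cost of being longer; the paper's route is shorter but leans on a standard but non-trivial resolution fact. (One small remark: the paper writes $\dim(s(U))=\dim(B)\geq 2$, but as your argument makes clear, the hypothesis only forces $\operatorname{trdeg}_k M \geq 1$, which is already enough for the paper's dimension count; the paper's ``$\geq 2$'' is a harmless slip.)
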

\begin{proof}
Let $f_M:C\to X_M$ be the induced map obtained by the base change of $X$ to $M$.  By replacing $C$ with the normalization of the image of $f_M$, we may assume without loss of generality that $C$ is the normalization of its image.

We first claim that the image of $f_M$ cannot be normal at $f_M(0)$.  To see this, we first \emph{spread} the morphism $f$, that is, consider the diagram 
$$\xymatrix{
C \ar[r]\ar[d]\ar@/^2pc/[rr]^f & \@C \ar[r]_-{\hat{f}}\ar[d] & X \\
\Spec M \ar@/^1.5pc/[u]^0 \ar[r]^-\eta & B\ar@/^1.5pc/[u]^s &  
}$$
in which 
\begin{itemize}
\item $B$ is an integral $k$-scheme and $\eta:\Spec M \to B$ is the generic point of $B$; 
\item $\@C \to B$ is smooth curve with a section $s$; and 
\item the square is cartesian, so the curve $(C,0)$ is the generic fiber of $\@C\to B$. 
\end{itemize}
Since $f(0)$ is a $k$-rational point, $\hat{f}\circ s$ is a constant map.  If the image $f_M(C)$ is normal at $f_M(0)$, then since $C$ also happens to be the normalization of its image, we deduce that $f_M$ is an immersion in a neighbourhood of $0$.  By a standard limiting argument, we can conclude that there exists an open set $U\subset B$ and an open neighbourhood $\@C^0 \subset \@C_U$ of the image of the section $s(U)$, such that $(\hat{f}|_{\@C^0})_M: \@C^0 \to X_{M}$ is an immersion.  However, this is impossible if $\hat{f}\circ s$ is a constant map, since $\dim(s(U)) = \dim(B)\geq 2$ by assumption.  This shows that if $f_M(C)$ is normal at the image of $0$, then $f(0)$ cannot be a $k$-rational point.

Let $\tilde{X}\to X$ be the blowup of $X$ at $f(0)$.  The map $f$ lifts uniquely to give a map $\tilde{f}:C \to \tilde{X}$.  If $\tilde{f}(0)$ is a positive dimensional point, then we are done. Assume the contrary; then $\tilde{f}(0)$ has to be a $k$-rational point, since $k$ is algebraically closed in $M$.  Note that the map $\tilde{X}_M \to X_M$ is also the blowup of $X_M$ at $f_M(0)$.  We may replace $X$ with $\tilde{X}$ without loss of generality. However, by the embedded desingularization of a curve, this process can only be repeated finitely many times until $\tilde{f}_M(C)$ becomes regular at $\tilde{f}_M(0)$ in which case, $\tilde{f}(0)$ cannot be a $k$-rational point as observed in the above paragraph. This completes the proof of the lemma. 

\end{proof}

\begin{lemma}\label{be_positive_be_free}
We follow the notation described in Notation \ref{notation}.  Let $K$ be the function field of a variety $X$ over $k$ and let $p\in X(K)$ be a $K$-rational point, which is not in the image of $X(k)$. Then the image of $p^m$ is not contained in $\Delta$.  
\end{lemma}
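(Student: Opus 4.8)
The plan is to reduce the statement to pure field theory. Write $\Delta = \bigcup_{1 \le i < j \le m} \Delta_{ij}$, where $\Delta_{ij} \subseteq X^m$ is the closed subscheme $\{x_i = x_j\}$ (isomorphic to $X^{m-1}$, hence reduced). Since $\Spec K'$ is reduced and each $\Delta_{ij}$ is a reduced closed subscheme, the image point of $p^m \colon \Spec K' \to X^m$ lies in $\Delta$ if and only if $p^m$ factors through some $\Delta_{ij}$; and, because $\Delta_{ij}$ is the preimage of the diagonal of $X \times_k X$ under $(\mathrm{pr}_i, \mathrm{pr}_j)$, this is in turn equivalent to $\mathrm{pr}_i \circ p^m = \mathrm{pr}_j \circ p^m$ for some $i \ne j$. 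So it suffices to show that the $m$ morphisms $\mathrm{pr}_1 \circ p^m, \dots, \mathrm{pr}_m \circ p^m \colon \Spec K' \to X$ are pairwise distinct.

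By construction $p^m$ is the restriction to the generic point $\Spec K'$ of the product $p \times_k \cdots \times_k p \colon \Spec K^{\otimes m} \to X^m$, so $\mathrm{pr}_l \circ p^m$ is the composite $\Spec K' \to \Spec K \xrightarrow{p} X$ in which the first arrow is dual to the inclusion $\iota_l \colon K \hookrightarrow K^{\otimes m} \hookrightarrow K'$ of the $l$-th tensor factor (the second inclusion being injective because $\Spec K^{\otimes m}$ is integral, by the Grothendieck-Sharp theorem recalled in Notation \ref{notation}). Let $x_p \in X$ be the image point of $p$, choose an affine open $\Spec A \subseteq X$ containing $x_p$, and let $\rho \colon A \to K$ be the ring homomorphism corresponding to $p$, so that $\ker \rho$ is the prime of $x_p$ and $\kappa(x_p) = \mathrm{Frac}(A/\ker\rho) = \mathrm{Frac}(\rho(A))$. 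Then $\mathrm{pr}_i \circ p^m = \mathrm{pr}_j \circ p^m$ becomes the equality $\iota_i \circ \rho = \iota_j \circ \rho$ of homomorphisms $A \to K'$.

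The key input is the elementary fact that, for $c \in K$ and $i \ne j$, one has $\iota_i(c) = \iota_j(c)$ in $K^{\otimes m}$ (hence in $K'$) if and only if $c \in k$: applying a $k$-linear functional $\lambda \colon K \to k$ with $\lambda(1) = 1$ to every tensor slot other than the $i$-th and $j$-th reduces the identity to $c \otimes 1 = 1 \otimes c$ in $K \otimes_k K$, and applying $\lambda \otimes \mathrm{id}$ then yields $c = \lambda(c) \in k$. Consequently $\iota_i \circ \rho = \iota_j \circ \rho$ forces $\rho(A) \subseteq k$, whence $\kappa(x_p) = \mathrm{Frac}(\rho(A)) = k$; but then $p$ factors through $\Spec k$, i.e.\ $p$ lies in the image of $X(k)$, contradicting the hypothesis. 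Therefore the components of $p^m$ are pairwise distinct, and the image of $p^m$ is not contained in $\Delta$.

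I do not foresee a real obstacle: the argument is essentially bookkeeping. The two points that need a little care are the reduction in the first paragraph (using reducedness of $\Spec K'$ and separatedness of $X$ to pass from ``the image point lies in $\Delta_{ij}$'' to ``$p^m$ factors through $\Delta_{ij}$'', and then to the equality of projections), and the injectivity of $K^{\otimes m} \hookrightarrow K'$ --- which is precisely where the geometric integrality of $X$, guaranteed by the setup in Notation \ref{notation}, is used.
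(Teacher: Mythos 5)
Your proof is correct, and it takes a genuinely different route from the paper. The paper argues geometrically: it sets $Z = \overline{\{p\}}$, observes that $Z$ is geometrically irreducible (because $k$ is algebraically closed in $K$), deduces $\dim Z > 0$ from the hypothesis that $p$ is not $k$-rational, shows the closure of the image of $p^m$ is $Z^m$, and then notes that $Z^m$ cannot sit inside any partial diagonal because $Z^m \cap \Delta_{ij} \cong Z^{m-1}$ has strictly smaller dimension. You instead carry out a direct algebraic verification: reducing containment in $\Delta$ to an equality $\iota_i \circ \rho = \iota_j \circ \rho$ of ring maps into $K'$, using injectivity of $K^{\otimes m} \hookrightarrow K'$ (which is where both arguments invoke Grothendieck--Sharp) to pass to $K^{\otimes m}$, and then showing by a clean functional-calculus argument that $\iota_i(c) = \iota_j(c)$ in $K^{\otimes m}$ forces $c \in k$, whence $\kappa(x_p) = k$ and $p$ is $k$-rational, a contradiction. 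Both proofs hinge on the same underlying phenomenon (non-$k$-rationality of $p$ makes the coordinates of $p^m$ ``genuinely different''), but yours avoids dimension counting and closures entirely in favor of a self-contained tensor computation, which is arguably more elementary and makes more transparent exactly where the hypothesis on $p$ enters. The paper's version is shorter and more geometric in flavor, relying on the reader to see that $Z^{m-1} \subsetneq Z^m$ when $\dim Z > 0$.
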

\begin{proof}
Let $Z$ be the closure of $p$ (that is, the image of $p$).  Since $K$ is the function field of a $X$, which is a geometrically irreducible variety, we see that $Z$ is geometrically irreducible. In particular, it is zero dimensional if and only if it is a $k$-rational point. However, by hypothesis, $p$ is not $k$-rational. Hence $\dim(Z)>0$. The closure of the image of $p^m$, that is, the closure of the image of 
$$\Spec(E) \xrightarrow{\simeq} \Spec(K)^m \xrightarrow{p^m} X^m$$ 
is precisely $Z^m$. Since $\dim(Z)>0$, it follows that $Z^m$ is not contained in any partial diagonal in $X^m$. 
\end{proof}

\begin{lemma}\label{lemma singlep1} 
Let $X, K, E, L$ be as in Notation \ref{notation}.  Let $p,q \in X(E)$. Let $\phi: \P^1_K \to X$ be a morphism with $\phi(0) = p$ and $\phi(1)=q$.  Then there exists a morphism $$\psi:\P^1_L \to S^mX$$ connecting the images of $p^m$ and $q^m$ in $S^m(X)$. 
\end{lemma}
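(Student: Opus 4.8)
The plan is to manufacture from the given $\phi$ a morphism $\Phi\colon \P^1_E\to X^m$ which is equivariant for the permutation action of $S_m$ on the factors of $X^m$, and then to descend the composite $\P^1_E\to X^m\to S^mX$ from $E$ down to $L$. First I would recall the structure of the extension $E/L$: since $S^mX=X^m/S_m$ and $X$ is geometrically integral, Notation~\ref{notation} (the Grothendieck--Sharp theorem) identifies $E$ with the fraction field of $K^{\otimes m}$; moreover the $S_m$-action permuting the factors of $X^m$ is faithful on $E=k(X^m)$ and $L=k(X^m/S_m)=E^{S_m}$, so $E/L$ is a finite Galois extension with group $S_m$. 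This identification of $E$ with $\mathrm{Frac}(K^{\otimes m})$ is precisely what makes the construction below work.

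Next I would build $\Phi$. Let $\iota_1,\dots,\iota_m\colon K\hookrightarrow K^{\otimes m}\hookrightarrow E$ be the $m$ structural inclusions, and, using the ($j$-independent) identification $\P^1_E=\P^1_k\times_k\Spec E$, define $\phi_j$ to be the composite $\P^1_E\xrightarrow{\ \mathrm{id}\times\iota_j^{*}\ }\P^1_K\xrightarrow{\ \phi\ }X$. These are $m$ morphisms of $k$-schemes with common source $\P^1_E$, so they assemble into a morphism $\Phi:=(\phi_1,\dots,\phi_m)\colon\P^1_E\to X^m$. Writing $p=\phi(0)$ and $q=\phi(1)$ (viewed as $K$-points of $X$), one unwinds the definition of $p^m$ and $q^m$ in Notation~\ref{notation} to see that $\Phi(0)=p^m$ and $\Phi(1)=q^m$ in $X^m(E)$. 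Composing with the quotient map $\pi\colon X^m\to S^mX$ then produces $\overline\Phi:=\pi\circ\Phi\colon\P^1_E\to S^mX$, which sends $0$ and $1$ to the images of $p^m$ and $q^m$.

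The crux is to descend $\overline\Phi$ along $E/L$. For $\sigma\in S_m$ acting on $E$ over $L$, let $\widetilde\sigma:=\mathrm{id}_{\P^1_k}\times\sigma^{*}$ be the induced automorphism of $\P^1_E$. Since $\sigma$ permutes the structural inclusions (with the standard convention, $\sigma\circ\iota_j=\iota_{\sigma(j)}$), one gets $\phi_j\circ\widetilde\sigma=\phi_{\sigma(j)}$, so that $\Phi\circ\widetilde\sigma$ equals $\Phi$ followed by the permutation $\sigma$ of the factors of $X^m$; applying the $S_m$-invariant map $\pi$ then gives $\overline\Phi\circ\widetilde\sigma=\overline\Phi$ for every $\sigma$. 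Thus $\overline\Phi$ is invariant under $\mathrm{Gal}(E/L)=S_m$, and since $\Spec E\to\Spec L$ is a faithfully flat (Galois) cover, $\overline\Phi$ descends to a unique morphism $\psi\colon\P^1_L\to S^mX$ with $\psi\otimes_LE=\overline\Phi$. Evaluating at $0$ and $1$, we see that $\psi(0)$ and $\psi(1)$ base-change over $E$ to $\pi(p^m)$ and $\pi(q^m)$ respectively, so $\psi$ connects the images of $p^m$ and $q^m$ in $S^mX$, as required.

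I expect the only genuinely delicate point to be getting the equivariance identity $\Phi\circ\widetilde\sigma=\sigma\cdot\Phi$ stated correctly, with mutually consistent conventions for the $S_m$-action on $K^{\otimes m}$, on the factors of $X^m$, and on the inclusions $\iota_j$ (and correspondingly on $E$ over $L$). Everything else is bookkeeping: the verification that $\Phi(0)=p^m$ and $\Phi(1)=q^m$ is a direct comparison with Notation~\ref{notation}, and the Galois descent step is entirely standard once the identification $E=\mathrm{Frac}(K^{\otimes m})$ and the fact $L=E^{S_m}$ are in place.
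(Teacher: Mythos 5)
Your proposal is correct and takes essentially the same route as the paper: both construct the $S_m$-equivariant morphism $\Phi = (\phi_1,\dots,\phi_m)\colon \P^1_E \to X^m$ from the structural inclusions $K\hookrightarrow K^{\otimes m}\hookrightarrow E$ (the paper packages this as the $m$-fold product of $\phi$ precomposed with the diagonal $\P^1_k\times_k E\to(\P^1_k)^m\times_k E$, which is the same map), then pass to $S_m$-quotients. Your final "Galois descent of the $S_m$-invariant composite $\pi\circ\Phi$" is exactly the paper's "take quotient by $S_m$," just phrased differently.
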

\begin{proof}
Note that $E$ is the function field of the integral scheme $\Spec K^{\otimes m}$. Note that $S_m$ acts on $\Spec K^{\otimes m}$ by permutation of factors and hence we have an induced $S_m$-action on $E$. We have an $S_m$-equivariant isomorphism 
\[ 
(\P^1_K)^m \xrightarrow{\simeq} (\P^1_k)^m \times_k E,
\]
where the $S_m$-action on the right is the diagonal action. Taking $m$-fold product of the morphism $\phi$, we get an $S_m$-equivariant morphism 
\[
\phi^m:  (\P^1_k)^m \times_k E \cong (\P^1_K)^m \to X^m. 
\]
Now precomposing the above map with the diagonal $\P^1_k \times_k E \xrightarrow {\Delta \times {\rm id}_{E}} (\P^1_k)^m \times_k E$, we get an $S_m$-equivariant morphism
\[
\P^1_k \times_k E \to X^m.
\]
We obtain the desired morphism $\psi$ after taking quotient by $S_m$.
\end{proof}

We are now set to prove the main result of this section.

\begin{theorem}
\label{theorem:symmetric_model} 
Let $X$ be a smooth projective $\A^1$-connected variety over an algebraically closed field $k$ of characteristic $0$ and fix a positive integer $m$.  Then any smooth proper variety birational to $\Sym^m X$ is $\A^1$-connected. 
\end{theorem}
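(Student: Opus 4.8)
Since $\A^1$-connectedness is a birational invariant of smooth proper varieties (\cite[Theorem 3]{Asok-crelle}), it is enough to exhibit a \emph{single} smooth proper variety birational to $\Sym^m X$ that is $\A^1$-connected. The cases $\dim X\le 1$ are immediate: $\Sym^m X=\Spec k$ when $\dim X=0$, and when $\dim X=1$ one has $X\cong\P^1_k$ (the only smooth projective $\A^1$-connected curve over an algebraically closed field), so $\Sym^m\P^1_k\cong\P^m_k$ is rational. So assume $d:=\dim X\ge 2$. The locus $U_Y:=(Y^m\setminus\Delta)/S_m$ is a dense open smooth subscheme of $\Sym^m Y$ for any variety $Y$, and an unordered $m$-tuple of distinct $k$-rational points of $Y$ is a $k$-rational point of it. Fixing a resolution $W\to\Sym^m Y$ (smooth, proper, an isomorphism over $U_Y$), Corollary \ref{cor criterion} shows that $W$ is $\A^1$-connected as soon as the generic point $\theta$ of $\Sym^m Y$ is joined to a $k$-rational point of $U_Y$ by a \emph{chain of rational curves in $\Sym^m Y$ all of whose junction points lie in $U_Y$}: each link then lifts uniquely to a morphism $\P^1\to W$, and since the junctions lie where $W\to\Sym^m Y$ is an isomorphism, the lifts of consecutive links agree at the junctions and assemble into an $\A^1$-chain homotopy in $W$ from its generic point to a $k$-rational point. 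The plan is to produce such a chain with $Y=\hat X$ a suitable blowup of $X$.

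\textbf{Building the chain and clearing the diagonal.} Apply Corollary \ref{cor criterion} to $X$ itself: its generic point $\eta$ is joined to some $x_0\in X(k)$ by a chain $\phi_1,\dots,\phi_r\colon\P^1_K\to X$. (Here one uses that $X$, being $\A^1$-connected and smooth proper over an algebraically closed field, is moreover rationally connected, which gives enough rational curves to arrange the links in sufficiently general position so that the geometric hypothesis of Lemma \ref{lemma pre-arrange-x} is met below.) Transporting this chain link by link through Lemma \ref{lemma singlep1} yields a chain in $\Sym^m X$ from $\theta=\eta^m$ to $x_0^m$; by Lemma \ref{be_positive_be_free} a junction $\phi_i(1)^m$ already lies in $U_X$ whenever $\phi_i(1)$ is a positive-dimensional point of $X$. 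The only obstruction is that the terminal point $x_0^m$ — and any junction occurring at a $k$-rational point of $X$ — lies on $\Delta$. This is exactly what Lemma \ref{lemma pre-arrange-x} is designed to remedy: applying it at each such offending point and replacing $X$ by the resulting blowup $\hat X$ (still smooth, projective and $\A^1$-connected, with $\Sym^m\hat X$ birational to $\Sym^m X$), the lifts of the links have \emph{all} junctions, the terminal one included, at positive-dimensional points. By Lemma \ref{be_positive_be_free}, the transported chain in $\Sym^m\hat X$ now runs from $\theta$ to $q^m$ with every junction in $U_{\hat X}$, where $q$ is a positive-dimensional point lying on a rational exceptional divisor $\cong\P^{d-1}_k$ of the last blowup.

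\textbf{Reaching a genuine $k$-rational point.} It remains to join $q^m$ to a $k$-rational point of $U_{\hat X}$. The $m$ ``copies'' $q_1,\dots,q_m$ of $q$ occurring in $q^m$ are pairwise distinct points of the rational variety $\P^{d-1}_k\subset\hat X$ (again by Lemma \ref{be_positive_be_free}). Using the rational-approximation results of Section \ref{subsection approximation} — which let one move points of a rational smooth proper variety by rational curves into affine-space-like open sets — one moves these copies simultaneously, along rational curves realized inside $\Sym^m(\P^{d-1}_k)\hookrightarrow\Sym^m\hat X$, to $m$ distinct $k$-rational points, keeping the tuple in the free locus so that the resulting last link has both endpoints in $U_{\hat X}$. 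Concatenating with the earlier chain gives the required chain of rational curves in $\Sym^m\hat X$, with junctions in $U_{\hat X}$, from $\theta$ to a $k$-rational point of $U_{\hat X}$; by the discussion of the first paragraph this makes the resolution of $\Sym^m\hat X$ $\A^1$-connected, and birational invariance then yields the statement for every smooth proper variety birational to $\Sym^m X$.

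\textbf{Main obstacle.} I expect the genuinely delicate part to be precisely the geometric bookkeeping underlying the last two paragraphs: guaranteeing that the $m$ moving points never collide, so that the connecting rational curves in the symmetric power keep their junction points in the smooth locus $U_{\hat X}$, and arranging the links of the original chain in $X$ in general enough position (2-dimensional image) for Lemma \ref{lemma pre-arrange-x} to apply. This is the ``tricky geometric argument'' that forces one to pass to blowups and to approximate abstract $\A^1$-homotopies by honest rational curves, rather than working naively with $\A^1$-homotopies in $X$ and their $m$-fold products.
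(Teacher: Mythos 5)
Your proposal is correct and follows essentially the same route as the paper: reduce by birational invariance, build an $\A^1$-chain in $X$ from Corollary \ref{cor criterion}, use Lemma \ref{lemma pre-arrange-x} to blow up so that the endpoint becomes positive-dimensional, push the chain through Lemma \ref{lemma singlep1} into $\Sym^m\tilde X$ with Lemma \ref{be_positive_be_free} keeping nodes in the free locus, use rationality of $\Sym^m E$ together with Proposition \ref{prop rational approximation} to reach a $k$-rational point, and finish by lifting to a resolution and invoking Corollary \ref{cor criterion} again. Your added remarks about handling intermediate junctions that happen to be $k$-rational and arranging the links to satisfy the dimension hypothesis of Lemma \ref{lemma pre-arrange-x} are sensible refinements of the same argument rather than a different approach.
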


\begin{proof}
Since $\A^1$-connectedness is a birationally invariant property of smooth proper varieties \cite[Theorem 3.9]{Asok-crelle}, it suffices to obtain one smooth proper $\A^1$-connected variety birational to $\Sym^m X$.

Since $X$ is $\A^1$-connected, there exists an $\A^1$-chain homotopy connecting a $k$-rational point $x_0\in X(k)$ to the generic point $\eta$ of $X$ by Corollary \ref{cor criterion}.  Since $X$ is proper, this $\A^1$-chain homotopy gives rise to a morphism 
\[
f :C = \Union^r_{i=1} \P^1_L \xrightarrow{\cup f_i} X
\]
satisfying $f_1(0)=x_0$, $f_i(1) = f_{i+1}(0)$ for $1 \leq i \leq r-1$, and $f_r(1)=\eta$.  Applying Lemma \ref{lemma pre-arrange-x}, we can find a blowup $\pi:\tilde{X}\to X$ (obtained by successively blowing up points over $x_0$) and $\tilde{f_i}$.  If we denote the lift of $f_i$ to $\tilde{X}$ by $\tilde{f}_i$ for every $i$, we see by Lemma \ref{lemma pre-arrange-x} that $\tilde{x}_0:=\tilde{f_1}(0)$ is a positive dimensional point of $\~X$.  Since $\pi:\tilde{X}\to X$ is an isomorphism outside $x_0$, the lifts $\tilde{f_i}$ glue to give a well-defined map $\tilde{f}: C \to \tilde{X}$ lifting $f$.  Thus, $\tilde{f}$ is a chain of rational curves in $\~X$ connecting $\tilde{x}_0$ to $\eta$.  Note that $\tilde{x}_0$ lies in the exceptional divisor $E$ of $\pi$. 

We now apply Lemma \ref{lemma singlep1} to each of the $\tilde{f_i}$ and get a chain of rational curves defined over the generic point $\theta$ of $\Sym^m\tilde{X}$. This chain of curves joins $\tilde{x}_0^m$ to $\theta$.  All nodes of this chain are in the free locus  of $\Sym^m X$, thanks to Lemma \ref{be_positive_be_free}. The point $\tilde{x}_0^m$ is also in the free locus by the same argument. In addition, $\tilde{x}_0^m \in \Sym^m E$ (considered as a subscheme of $\Sym^m \tilde{X}$).  

Note that $E$ is a projective space and consequently, $\Sym^m E$ is rational by \cite{Mattuck} (see also \cite[Chapter 4, Theorem 2.8]{GKZ}).  Let $U$ be an open subscheme of $\Sym^m E$ that is isomorphic to an open subscheme of an affine space.  Therefore, any point of $U(F)$ can be connected by an $\A^1$-homotopy to a point of $U(F)$ lying in the image of the natural map $U(k) \to U(F)$.  If $\tilde{x}_0^m \in U$, then we are done.  If not, we apply Proposition \ref{prop rational approximation} to obtain an $\A^1$-homotopy $h: \A^1 \to \Sym^m E$ connecting $\tilde{x}_0^m$ to a point in $U$.  We thus get a chain of rational curves in $\Sym^m \tilde{X}$, which joins a $k$-rational point in the free locus of $\Sym^m \tilde{X}$ to the generic point of $\Sym^m \tilde{X}$, and all whose nodes are in the free locus. 
 
Let $Y\to \Sym^m \tilde{X}$ be a desingularization which is an isomorphism over the smooth locus of $\Sym^m \tilde{X}$.  Such a desingularization exists since $k$ is assumed to be of characteristic $0$.  Then the chain of rational curves found in the above paragraph lifts to give a chain of rational curves in $Y$.  This gives an $\A^1$-chain homotopy connecting a $k$-rational point of $Y$ to its generic point.  Hence, $Y$ is $\A^1$-connected by Corollary \ref{cor criterion}.  Since $Y$ is also a desingularization of $\Sym^m X$, this completes the proof of the theorem.
\end{proof}

\section{\texorpdfstring{$\A^1$}{A1}-connectedness of norm varieties}
\label{section norm varieties}

In this section, we use the results of Sections \ref{section criterion} and \ref{section symmetric} to prove Theorem \ref{intro thm: norm varieties} stated in the introduction.  We begin with some preliminaries on norm hypersurfaces.

\begin{definition}
\label{definition norm hypersurface}
Let $F$ be a field and let $K/F$ be a finite field extension of degree $n$. If we pick a basis for the $F$-vector space $K$, the norm function $N_{K/F}: K \to F$ is seen to be given by a polynomial in $n$ variables. Thus, there exists a homomorphism of algebraic groups $N: R_{K/F}(\#G_{m,K}) \to \#G_{m,F}$ such that $N_{K/F}$ describes the behaviour of $N$ on $F$-valued points. For any $a \in F^{\times} = \#G_m(F)$, we denote the scheme $N^{-1}(a)$ by $H_{K,a}$ and call it the \emph{norm hypersurface} defined by the extension $K/F$ and the element $a \in F^{\times}$. 
\end{definition}

Clearly, $H_{K,1}(F) \neq \emptyset$. For any $x \in K^{\times}$, the morphism $m_x: \#G_{m,K} \to \#G_{m,K}$, $y \mapsto xy$ induces a $k$-automorphism of $R_{K/k}(m_x)$ of $R_{K/F}(\#G_{m,K})$. This automorphism maps $H_{K,1}$ isomorphically onto $H_{K,N_{K/F}(x)}$. Note that an element $x$ of $K^{\times}$ corresponds to a $F$-rational point of $R_{K/F}(\#G_{K,m})$. Thus, we see that if $H_{K,a}(F) \neq \emptyset$, then $H_{K,a}$ is isomorphic to $H_{K,1}$.

\begin{proposition}
\label{prop:norm-hypersurface}
Let $\ell$ be a prime number.  Let $K/F$ be a separable extension of a field $F$ and let $L/F$ be its Galois closure. Suppose that $\Gal(L/F) = S_{\ell}$, the symmetric group on $\ell$ letters, and that $\Gal(L/K)$ is the stabilizer of one of the letters (so that $\Gal(L/K) \cong S_{\ell-1}$).  If $a\in F^{\times}$ is such that $H_{K,a}(F) \neq \emptyset$, then any birational smooth proper model $X$ of $H_{K,a}$ is $\A^1$-connected. 
\end{proposition}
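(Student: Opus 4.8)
The plan is to use Corollary \ref{cor:open} together with Theorem \ref{theorem:symmetric_model}, after identifying $H_{K,1}$ birationally with an open subset of a symmetric power of a projective space. First I would reduce to the case $a = 1$: since $H_{K,a}(F) \neq \emptyset$ by hypothesis, the discussion preceding the Proposition shows that $H_{K,a}$ is isomorphic to $H_{K,1}$ over $F$, so any smooth proper model of one is a smooth proper model of the other. Thus it suffices to produce one smooth proper $\A^1$-connected model of $H_{K,1}$, and by birational invariance of $\A^1$-connectedness for smooth proper varieties \cite[Theorem 3.9]{Asok-crelle}, it suffices to describe $H_{K,1}$ birationally.

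The key geometric point is the following well-known description of a degree-$\ell$ norm hypersurface. Write $L/F$ for the Galois closure with $G := \Gal(L/F) = S_\ell$ and $H := \Gal(L/K) = S_{\ell-1}$ the stabilizer of the letter $\ell$; then the $\ell$ embeddings $K \hookrightarrow L$ are permuted simply transitively by $G/H$, and $N_{K/F}(y)$ becomes, over $L$, the product $\prod_{i=1}^\ell y_i$ of the $\ell$ conjugates. Concretely, $R_{K/F}(\#G_{m,K})_L \cong (\#G_{m,L})^\ell$ with $S_\ell$ permuting the factors, and under this identification $N$ becomes the product map $(\#G_{m,L})^\ell \to \#G_{m,L}$. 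Hence $H_{K,1}$, after base change to $L$, is the subvariety $\{y_1 \cdots y_\ell = 1\} \subset (\#G_{m,L})^\ell$ with its $S_\ell$-action; but this subvariety is $S_\ell$-equivariantly isomorphic to $(\#G_{m,L})^{\ell-1} \cong \#A^{\ell-1}_L$-open... more precisely, projecting to the first $\ell-1$ coordinates identifies it with an open subset of $\#A^{\ell-1}_L$. The upshot, descending back to $F$, is that $H_{K,1}$ is birational (indeed isomorphic to an open subvariety) of $\Sym^{\ell}(\#P^1)$-type construction — the cleanest way to see it is that the projection of $\{y_1 \cdots y_\ell = 1\}$ forgetting the constraint realizes $H_{K,1}$ as the ``twisted form'' whose $L$-points are unordered... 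I would instead argue: $H_{K,1}$ is birational to the quotient of an open subset of $(\#P^1_F)^{\ell-1}$ by a free $S$-action, hence to an open subvariety of $\Sym^{\ell-1}(\#P^1_F)$ after untwisting via $L/F$, and $\Sym^{\ell-1}(\#P^1_F) = \#P^{\ell-1}_F$ is rational. Either way: $H_{K,1}$ is rational over $F$, so any smooth proper model is $\A^1$-connected by Corollary \ref{cor criterion} combined with Proposition \ref{prop rational approximation}, \emph{provided $F$ is algebraically closed} — which is not assumed.

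Since $F$ need not be algebraically closed, rationality is too much to hope for, so the argument must genuinely use the symmetric-power structure and invoke Theorem \ref{theorem:symmetric_model} over $\-F$... but that theorem is stated over an algebraically closed field and gives $\A^1$-connectedness of models of $\Sym^m(\-X)$, not over $F$. Therefore the correct route is via Corollary \ref{cor:open}: taking $U := H_{K,1}$ with the evident rational point $x_0 \in H_{K,1}(F)$ (e.g. $y = 1$), I must check that for every field extension $F'/F$ and every $x \in H_{K,1}(F')$ there is a rational map $h : \#P^1_{F'} \dashrightarrow H_{K,1}$ defined at $0,1$ with $h(0) = x$, $h(1) = x_0$. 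Over $F'$ a point of $H_{K,1}$ is an element $y \in (K \otimes_F F')^\times$ with $N(y) = 1$; since $H_{K,1}$ is an open subvariety of an affine space over \emph{any} base (by the coordinate description above, valid over $F$ not just $\-F$: killing one coordinate of $\{ \prod y_i = 1\}$ works Galois-equivariantly only after twisting, but $H_{K,1}$ is still rational over $F$ because $R_{K/F}(\#G_m)/\#G_m$-type quotients of tori are rational — indeed $H_{K,1} = \ker(N: R_{K/F}\#G_m \to \#G_m)$ is a torus, and norm-one tori of this shape are rational by Voskresenskii/Endo–Miyata since $S_\ell$-lattices of this type are stably permutation... at minimum $H_{K,1}$ is an open subvariety of affine space). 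Granting that $H_{K,1}$ is $F$-birational to an open $U_0 \subset \#A^d_F$, any two points of $U_0(F')$ are joined by a line segment (a rational map $\#P^1_{F'} \dashrightarrow U_0$ defined at $0, 1$), which transports back to the required $h$. Then Corollary \ref{cor:open} applies verbatim and yields that any smooth proper model $X$ of $H_{K,1}$ — hence of $H_{K,a}$ — is $\A^1$-connected.

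\textbf{Main obstacle.} The crux is the rationality (or open-subvariety-of-affine-space property) of the norm-one torus $H_{K,1} = \ker\!\left(N : R_{K/F}(\#G_{m,K}) \to \#G_{m,F}\right)$ over the possibly-non-closed field $F$, using precisely the hypothesis $\Gal(L/F) = S_\ell$ with $\Gal(L/K) = S_{\ell-1}$. I expect the paper handles this by the explicit parametrization: if $y = (y_1, \ldots, y_\ell)$ are the conjugates over $L$, then $(y_1, \ldots, y_{\ell-1}) \mapsto (y_1, \ldots, y_{\ell-1}, (y_1 \cdots y_{\ell-1})^{-1})$ is a birational $S_\ell$-... no, rather one uses that the character lattice of $H_{K,1}$ is the $S_\ell$-lattice $\Z[S_\ell/S_{\ell-1}] / \Z \cong $ the root lattice $A_{\ell-1}$ (or its dual), which is a stably permutation lattice — making $H_{K,1}$ stably rational, hence rational in this low-complexity situation, over $F$. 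A cleaner self-contained substitute: the multiplication map $R_{K/F}(\#G_{m,K}) \to \#G_{m,F} \times H_{K,1}$, $y \mapsto (N(y), y / \widetilde{N(y)})$ for a chosen section, exhibits $R_{K/F}(\#G_{m,K})$ (which is an open subvariety of the affine space underlying $K$) as $\#G_m \times H_{K,1}$ birationally, whence $H_{K,1}$ is stably birational to a point and, being a torus of the right type, rational. Once rationality of $H_{K,1}$ over $F$ is in hand, the rest is the short Corollary \ref{cor:open} argument above and presents no difficulty.
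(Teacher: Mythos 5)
Your reduction to $a=1$ matches the paper exactly. The gap is in the second half: you try to establish that $H_{K,1}$ is \emph{rational} (or at least stably rational / an open subvariety of affine space) over the possibly non-closed field $F$, so that you can apply Corollary \ref{cor:open}. But this is overclaiming. The claim that the character lattice $\Z[S_\ell/S_{\ell-1}]/\Z$ is a stably permutation $S_\ell$-lattice would give stable rationality of $H_{K,1}$, which is not established by the argument you sketch (the ``kill one coordinate'' birational parametrization only works after base change to $L$, and doesn't descend because the projection is not $S_\ell$-equivariant; and the ``stably permutation'' assertion is exactly what would need proof). In fact, for a degree-$\ell$ separable extension with Galois group $S_\ell$, what is known is that the flasque class of this lattice is \emph{invertible}, which is equivalent to \emph{retract} rationality of the norm-one torus, not to rationality or stable rationality. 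You hedge on this point repeatedly (``at minimum $H_{K,1}$ is an open subvariety of affine space'', ``I expect the paper handles this by...''), and the hedging is where the proof breaks.

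The paper's route is shorter and avoids this issue entirely: after the reduction to $a=1$, it cites Endo's theorem \cite[Theorem 4.1]{Endo}, which states precisely that $H_{K,1}$ is \emph{retract rational} over $F$ under the $S_\ell/S_{\ell-1}$ hypothesis, and then invokes \cite[Theorem 2.3.6]{Asok-Morel}, which says that a retract rational smooth proper variety is $\A^1$-connected (this is the vertical implication in the diagram of near-rationality properties in the introduction). There is no need for Corollary \ref{cor:open}, no need for rationality, and no need to analyze the lattice by hand. If you want to repair your approach while keeping it self-contained, you would have to actually prove retract rationality (or rationality) of the norm-one torus, which is essentially reproving Endo's theorem; it is cleaner to cite it as the paper does, and to remember that the implication retract rational $\Rightarrow$ $\A^1$-connected is already available.
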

\begin{proof}
As we saw above, the condition $H_{K,a}(F) \neq \emptyset$ implies that $H_{K,a} \cong H_{K,1}$. Thus, we may assume that $a = 1$.  By \cite[Theorem 4.1]{Endo}, $H_{K,1}$ is retract rational. Now, it follows from \cite[Theorem 2.3.6]{Asok-Morel} that any smooth proper model of $H_{K,1}$ is $\#A^1$-connected. 
\end{proof}

We are now set to put all the ingredients together and give a proof of Theorem \ref{intro thm: norm varieties} stated in the introduction.  We start with a brief recollection of the construction of norm varieties given in \cite[\textsection 2]{Suslin-Joukhovitski}.

Let $X$ be a smooth projective variety over a field $k$ of characteristic $0$ and let $\ell$ be a prime number.  Let $\Sym^{\ell} X = X^\ell/S_\ell$ be the $\ell$th symmetric power of $X$, which need not be smooth in general.  Let $\Delta$ denote the union of all the partial diagonals of $X^{\ell}$.  The geometric quotient $U:= (X^{\ell} \setminus \Delta)/S_{\ell}$ is a smooth scheme.  There is a finite, surjective morphism $\pi: X \times \Sym^{\ell-1} X \to \Sym^{\ell} X$ of degree $\ell$ and its restriction $\pi^{-1}(U) \to U$ is a finite \'etale morphism of degree $\ell$.  The sheaf $$\@E:=\pi_*(\@O_{X \times \Sym^{\ell-1}X})|_U$$ is a locally free sheaf of $\@O_U$-algebras of rank $\ell$.  Let $\#V(\@E)$ denote the associated vector bundle on $U$.  Since $\@E$ is a locally free sheaf of $\@O_U$-algebras, there is a well-defined norm morphism
\[
N: \@E \to \@O_U,
\]
which can be viewed as a section of the degree $\ell$ component $\Sym^{\ell} \@E^{\vee}$ of the symmetric algebra on the dual of $\@E$.  

Any element $a \in k^{\times}$ can be viewed as a section of $\@O_U$.  Let $Y$ denote the closed subscheme of $\#V(\@E)$ defined by the equation $N = a$. Then, $Y$ is smooth over $U$ and geometrically irreducible, by \cite[Lemma 2.1]{Suslin-Joukhovitski}.  Define $N(X, a, \ell)$ to be a smooth compactification of $Y$, which exists since $k$ has characteristic $0$.  We may further arrange that $N(X, a, \ell)$ is proper and surjective over a smooth compactification of $U$ with smooth generic fiber, by resolution of indeterminacy and resolution of singularities.  In other words, $N(X, a, \ell)$ admits a surjective proper morphism to a smooth proper variety birational to $\Sym^{\ell} X$.  This observation will be crucial to our proof of Theorem \ref{intro thm: norm varieties} (see Theorem \ref{theorem:norm varieties} below).  

\begin{definition}
\label{def:norm variety}
Let $n \geq 2$ be an integer.  Let $a_1, \ldots, a_n \in k^{\times}$ and consider the symbol $\alpha := \{a_1, \ldots, a_n\} \in \KM_n(k)/\ell$.  The \emph{norm variety} $X_\alpha$ associated with the symbol $\alpha$ is inductively defined as follows.  Define $X_{\{a_1, a_2\}}$ to be the Severi-Brauer variety associated with the cyclic algebra corresponding to $\{a_1, a_2\}$ (see \cite[Chapter 5]{Gille-Szamuely}).  Inductively proceeding, we define 
\[
X_\alpha = X_{\{a_1, \ldots, a_n\}} := N(X_{\{a_1, \ldots, a_{n-1}\}}, a_n, \ell),
\]
for $n>2$.
\end{definition}

\begin{theorem}
\label{theorem:norm varieties}
Let $k$ be an algebraically closed field of characteristic $0$.  Let $n\geq 2$ be an integer and $a_1, \ldots, a_n \in k^{\times}$.  Then the norm variety $X_\alpha$ associated with the ordered sequence $\alpha = \{a_1, \ldots, a_n\}$ is $\A^1$-connected.
\end{theorem}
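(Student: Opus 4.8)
The plan is to argue by induction on $n$, feeding the inductive description $X_\alpha = N(X_{\{a_1,\dots,a_{n-1}\}}, a_n, \ell)$ into Corollary \ref{cor:fiberbase}, with $\A^1$-connectedness of the base of the relevant fibration supplied by Theorem \ref{theorem:symmetric_model} and $\A^1$-connectedness of its generic fibre supplied by Proposition \ref{prop:norm-hypersurface}.

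For $n = 2$ the variety $X_{\{a_1,a_2\}}$ is the Severi--Brauer variety of a cyclic $k$-algebra; since $k$ is algebraically closed every central simple $k$-algebra splits, so $X_{\{a_1,a_2\}}\cong\P^{\ell-1}_k$ is rational and hence $\A^1$-connected. For the inductive step, write $X:=X_{\{a_1,\dots,a_{n-1}\}}$ and $a:=a_n$; by the inductive hypothesis $X$ is $\A^1$-connected, and we may arrange the smooth compactifications in the construction to be projective, so that $X$ is smooth projective over $k$. By the construction recalled above, $X_\alpha=N(X,a,\ell)$ is a smooth proper variety equipped with a surjective, generically smooth, proper morphism $f\colon X_\alpha\to\overline{S}$ onto a smooth proper variety $\overline{S}$ birational to $\Sym^\ell X$, and over the smooth open locus $U=(X^\ell\setminus\Delta)/S_\ell$ the map $f$ restricts to the norm-equation variety $Y\to U$; in particular the generic fibre of $f$ is a smooth proper model of the generic fibre $Y_\eta$ of $Y\to U$.

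It remains to check that $\overline{S}$ and the generic fibre of $f$ are $\A^1$-connected; Corollary \ref{cor:fiberbase} then gives the result. That $\overline{S}$ is $\A^1$-connected is exactly Theorem \ref{theorem:symmetric_model}, applied to the smooth projective $\A^1$-connected variety $X$ over the algebraically closed field $k$ of characteristic $0$. For the generic fibre, let $\eta$ be the generic point of $U$, with residue field $F=k(\Sym^\ell X)=k(\overline{S})$. Since $X\times\Sym^{\ell-1}X$ is irreducible, the degree-$\ell$ finite \'etale cover $\pi^{-1}(U)\to U$ has irreducible fibre $\Spec K$ over $\eta$ for a degree-$\ell$ field extension $K=k(X\times\Sym^{\ell-1}X)$ of $F$. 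Writing $L=k(X^\ell)$ — a field, because $X$ is geometrically integral over $k$, by the Grothendieck--Sharp theorem as in Notation \ref{notation} — the group $S_\ell$ acts faithfully on $L$ by permuting factors (as $\dim X\geq 1$), so $L/F$ is Galois with $\Gal(L/F)=S_\ell$, and $K=L^{S_{\ell-1}}$ is the fixed field of the stabilizer of one letter, whence $\Gal(L/K)\cong S_{\ell-1}$. After choosing an $F$-basis of $K$, the generic fibre $Y_\eta$ is the norm hypersurface $H_{K,a}$. Finally, since $k$ is algebraically closed, $a$ has an $\ell$-th root $a^{1/\ell}\in k\subseteq F\subseteq K$, and $N_{K/F}(a^{1/\ell})=(a^{1/\ell})^{[K:F]}=a$, so $H_{K,a}(F)\neq\emptyset$. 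Proposition \ref{prop:norm-hypersurface} now applies and shows that any smooth proper model of $H_{K,a}$ — in particular the generic fibre of $f$ — is $\A^1$-connected, completing the induction.

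I expect the only genuinely delicate point to be the identification, at the generic point of $\Sym^\ell X$, of the fibre of the norm-variety construction with a norm hypersurface $H_{K,a}$ for a degree-$\ell$ extension $K/F$ whose Galois closure has group $S_\ell$ with $\Gal(L/K)=S_{\ell-1}$ — this being exactly what makes Proposition \ref{prop:norm-hypersurface} applicable — together with the (easy but essential) observation that over an algebraically closed base $k$ this hypersurface automatically acquires an $F$-rational point because $a$ is an $\ell$-th power in $k$; this last point is precisely why the statement is over $\overline{k}$ rather than an arbitrary field. One also has to keep the inductive construction within the class of smooth \emph{projective} varieties so that Theorem \ref{theorem:symmetric_model} applies at every stage.
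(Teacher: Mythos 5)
Your proposal is correct and follows essentially the same route as the paper: induction on $n$, with the base case $n=2$ handled by the Severi--Brauer variety becoming a projective space over $\overline{k}$, and the inductive step handled by applying Corollary~\ref{cor:fiberbase} to the map $X_\alpha \to \overline{S}$, feeding in Theorem~\ref{theorem:symmetric_model} for the base and Proposition~\ref{prop:norm-hypersurface} for the generic fibre. You go somewhat further than the paper's terse statement ``the hypotheses of Corollary~\ref{cor:fiberbase} are satisfied by Proposition~\ref{prop:norm-hypersurface} and Theorem~\ref{theorem:symmetric_model}'' by actually verifying that $L=k(X^\ell)$ is Galois over $F=k(\Sym^\ell X)$ with group $S_\ell$, that $K=k(X\times\Sym^{\ell-1}X)$ is the fixed field of $S_{\ell-1}$, and that $a$ being an $\ell$-th power in $\overline{k}$ forces $H_{K,a}(F)\neq\emptyset$ --- all of which is exactly what is needed and correct.
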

\begin{proof}
We proceed by induction on $n$.  As noted in the introduction, in the case $n=2$, the norm variety $X_\alpha$ is the Severi-Brauer variety associated with the cyclic algebra corresponding to the symbol $\{a_1, a_2\} \in \KM_2(k)/\ell$.  In this case, $X_\alpha$ is clearly $\A^1$-connected as it is isomorphic to a projective space.

Now, let $n>2$ and assume by induction that $X = X_{\{a_1, \ldots, a_{n-1}\}}$ is $\A^1$-connected.  Recall from the construction of $X_\alpha$ that it admits a surjective proper morphism to a smooth proper variety birational to $\Sym^{\ell} X$.  The generic fiber of this morphism is a smooth compactification of the norm hypersurface $N=a_n$ over $k(\Sym^\ell X)$.  Since $k$ is algebraically closed and $a_n \in k^\times$, this generic fiber admits a $k(\Sym^\ell X)$-rational point.  We can apply the criterion given by Corollary \ref{cor:fiberbase} to this morphism to conclude that $X_\alpha$ is $\A^1$-connected.  Note that the hypotheses of Corollary \ref{cor:fiberbase} are satisfied by Proposition \ref{prop:norm-hypersurface} and Theorem \ref{theorem:symmetric_model}.
\end{proof}

\appendix

\section{\texorpdfstring{$\A^1$}{A1}-connectedness of symmetric powers}
\label{appendix symmetric}

In this appendix, we show that the symmetric powers of smooth projective $\A^1$-connected varieties over an arbitrary field are $\A^1$-connected.

\begin{theorem}
\label{theorem:symmetric}
Let $X$ be an $\A^1$-connected smooth projective variety over an arbitrary field $k$. Then $\Sym^m X$ is $\A^1$-connected, for any positive integer $m$. 
\end{theorem}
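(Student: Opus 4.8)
The plan is to apply Theorem~\ref{theorem:criterion2} to $\Sym^m X = X^m/S_m$, using as base point $\bar x_0 := m[x_0]$, where $x_0 \in X(k)$ (such a point exists since $X$ is smooth, proper and $\A^1$-connected). First note that $\Sym^m X$ is indeed a variety over $k$: $X^m$ is integral because $X$, being smooth, proper and $\A^1$-connected, is geometrically integral, so $\Sym^m X$ is irreducible, and it is reduced since $\mathcal{O}_{\Sym^m X}$ is a subsheaf of rings of $q_*\mathcal{O}_{X^m}$, where $q\colon X^m\to\Sym^m X$ is the quotient map. Since Theorem~\ref{theorem:criterion2} requires $k$ perfect, I would first reduce to that case by base change to the perfect closure of $k$ (which commutes with forming $\Sym^m$, and $\A^1$-connectedness is insensitive to purely inseparable base field extensions). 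So assume $k$ is perfect; it then remains to verify, at each point of $\Sym^m X$, condition~(a) or~(b) of Theorem~\ref{theorem:criterion2}.

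Next I would show that every non-generic point of $\Sym^m X$ is movable. Stratify $\Sym^m X$ by multiplicity type: for a partition $\mu$ of $m$ with $\ell$ parts, $a_j$ of them of size $j$, let $\Sym^m_\mu X$ be the locus of $0$-cycles of type $\mu$; this is a smooth locally closed subvariety, being the (free) quotient by $\prod_j S_{a_j}$ of the open pairwise-distinct locus in $X^\ell$. Let $\bar x$ lie in the stratum $\Sym^m_\mu X$. If $\bar x$ is not the generic point of $\Sym^m_\mu X$, then it is a non-generic smooth point of the smooth variety $\Sym^m_\mu X$, hence movable there, and hence movable in $\Sym^m X$ (a moving curve in the stratum remains a moving curve in $\Sym^m X$, since the stratum is locally closed and $\overline{\{\bar x\}} \cap \Sym^m_\mu X$ computes the closure of $\bar x$ in $\Sym^m_\mu X$). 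If $\bar x$ is the generic point of $\Sym^m_\mu X$ with $\mu \neq (1^m)$ --- the case $\mu = (1^m)$ gives the generic point of $\Sym^m X$, treated in the next paragraph --- then $\overline{\{\bar x\}} = \overline{\Sym^m_\mu X}$ is the union of the strata that are coarsenings of $\Sym^m_\mu X$. Here I would use a ``split off a point'' construction: write $\mu_1 \geq 2$ for the largest part of $\mu$, let $P$ be the closed point of $X_{k(\bar x)}$ carrying the size-$\mu_1$ parts of the cycle $\bar x$, and set $Z' := \bar x - [P]$, an effective $0$-cycle. Since $[P]$ lies in the top open stratum of $\Sym^{\deg P}X$, it is a non-generic smooth point of it, hence movable; choosing a moving curve $\delta$ for $[P]$ that avoids finitely many divisorial conditions, the morphism $c \mapsto Z' + \delta(c)$ sends the distinguished point to $\bar x$ and has generic value in a strictly less degenerate stratum, disjoint from $\overline{\Sym^m_\mu X}$. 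So $\bar x$ is movable.

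For the generic point $\theta$ of $\Sym^m X$, condition~(b) is impossible, so I must check condition~(a): $\overline{\{\theta\}} = \Sym^m X$ contains $\bar x_0$, and $\theta$, $\bar x_0$ should agree in $\pi_0^{\A^1}(\Sym^m X)(k(\theta))$. By Corollary~\ref{cor criterion} applied to the smooth proper $\A^1$-connected variety $X$, the generic point of $X$ is connected to $x_0$ by an $\A^1$-chain homotopy, that is, by a chain of morphisms $\P^1_{k(X)} \to X$. Applying to each link the symmetrization construction from the proof of Lemma~\ref{lemma singlep1} --- which uses only the Grothendieck-Sharp theorem and makes sense over an arbitrary field --- produces a chain of morphisms $\P^1_{k(\Sym^m X)} \to \Sym^m X$ connecting $\theta$ to $m[x_0] = \bar x_0$; hence $\theta$ and $\bar x_0$ are $\A^1$-chain homotopic over $k(\Sym^m X)$ and in particular agree in $\pi_0^{\A^1}(\Sym^m X)(k(\Sym^m X))$. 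With conditions~(a)/(b) verified at every point, Theorem~\ref{theorem:criterion2} gives that $\Sym^m X$ is $\A^1$-connected.

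The main obstacle is the generic-point-of-a-stratum case in the second paragraph: it requires setting up the stratification of $\Sym^m X$ by multiplicity type together with its geometry --- smoothness of the strata, the specialization/closure order between them, and the effect of degeneration on the supports of $0$-cycles --- and carefully tracking the fields of definition of the pieces $P$ and $Z'$ of the cycle, so that the moving curve is genuinely defined over $k(\bar x)$ and genuinely leaves $\overline{\{\bar x\}}$. The symmetrization step for $\theta$ is comparatively routine; the one thing to double-check there is that the proof of Lemma~\ref{lemma singlep1} does not rely on the standing assumption of Section~\ref{section symmetric} that $k$ be algebraically closed of characteristic $0$ (it does not).
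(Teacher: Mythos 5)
Your argument is correct in outline and takes a genuinely different route from the paper at the key step of handling points in the deeper strata. Both proofs stratify $\Sym^m X$ by multiplicity type, observe that non-generic smooth points of the locally closed strata are movable, and dispatch the generic point of $\Sym^m X$ via Lemma~\ref{lemma singlep1}. Where they diverge is at the generic point $\bar x$ of a stratum $\Sym^m_\mu X$ with $\mu \neq (1^m)$: the paper identifies $\overline{\{\bar x\}} = \Sym^m X_\lambda$ with a product $\prod_i \Sym^{\lambda_i}X$ of \emph{lower} symmetric powers ($\lambda_i < m$), inducts on $m$, and verifies condition~(a) of Theorem~\ref{theorem:criterion2} using the $\A^1$-connectedness of that product. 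You instead show directly that $\bar x$ is movable (condition~(b)) by the ``split off a point'' construction, which eliminates the induction entirely at the cost of more careful bookkeeping of fields of definition. Your observation that one should pass to the perfect closure of $k$ before invoking Theorem~\ref{theorem:criterion2} is a worthwhile clarification that the paper does not make explicit, as is your note that Lemma~\ref{lemma singlep1} does not depend on the standing hypotheses of Section~\ref{section symmetric}.

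One imprecision worth flagging: $[P]$ need not be a non-generic point of $\Sym^{\deg P}X$. For example, when $\mu = (2)$ and $\bar x$ is the generic point of the small diagonal $X \hookrightarrow \Sym^2 X$, the point $P$ is the generic point of $X$ and $[P]$ is the generic point of $\Sym^1 X = X$, so it is not movable \emph{in $\Sym^{\deg P}X$}. The construction still works, but the correct justification is that the moving curve $\delta$ must in any case be defined over $\kappa(\bar x)$, so one should regard $[P]$ as a $\kappa(\bar x)$-rational point of the positive-dimensional smooth variety $(\Sym^{\deg P}X)_{\kappa(\bar x)}$, where it is a closed smooth point and hence movable; this supplies the non-constant $\delta$ over $\kappa(\bar x)$ that the morphism $c \mapsto Z' + \delta(c)$ requires.
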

\begin{proof}
Let $[m] = \{1, \ldots, m\}$ and let $A = (A_1, \ldots, A_m)$ be an $m$-tuple of subsets of $[m]$, which partition $[m]$, where some of the $A_i$ may be equal to $\emptyset$. For any field extension $L/k$, a point $x = (x_1, \ldots, x_m)$ of $X^m(L)$ is said to be of \emph{type $A$} if for any $1 \leq l \leq m$ and any $i,j \in A_l$, we have $x_i = x_j$.  We denote the closure of the set of all points of $X^m$ of type $A$ by $(X^m)_A$. Clearly, $$(X^m)_A \cong \prod_{A_i \neq \emptyset} X.$$ 

Let $\mult(A) = (\lambda_1, \ldots, \lambda_m)$, where $\lambda_i$ is the number of indices $j$, $1 \leq j \leq m$ such that $|A_j|= i$.  Let $\pi:X^m \to \Sym^m X$ denote the quotient map. It is clear that the image of $(X^m)_A$ is a closed subvariety depending only on $\mult(A)$ and hence, we denote it by $\Sym^m X_{\mult(A)}$.  Let 
\[
\Lambda_m = \left\{(\lambda_1, \ldots, \lambda_m)~|~ \sum_{i=1}^m \lambda_i = 0, \lambda_i \in \#Z_{\geq 0} \;\forall i \right\} \setminus \{(m,0, \ldots, 0)\}.
\] 
For any $\lambda \in \Lambda_m$, observe that $\sum_{i=1}^m \lambda_i \cdot i$ is a partition of $m$, where the integer $i$ occurs $\lambda_i$ times.  Note that $\lambda$ is not allowed to take the value $(m, 0, \ldots, 0)$. Thus, $\Lambda_m$ describes the partitions of $m$ in which there is at least one integer greater than $1$. Also note that for $\lambda = (\lambda_1, \ldots, \lambda_m) \in \Lambda_m$, we have $\lambda_i < m$ for any $i$. 

Since the action of $S_m$ on $\pi^{-1}(\Sym^m X \setminus \bigcup_{\lambda \in \Lambda_m} \Sym^m X_{\lambda})$ is free, we see that the singular locus of $\Sym^m X$ is contained in $\bigcup_{\lambda \in \Lambda_m} \Sym^m X_{\lambda}$. 

Fix a $\lambda = (\lambda_1, \ldots, \lambda_m) \in \Lambda_m$. We define 
\[
A_i := \left\{j \in [m]~|~ \sum_{l=1}^{i-1} \lambda_l < j \leq \sum_{l=1}^i \lambda_l \right\}.
\]
Let $A = (A_1, \ldots, A_m)$.  Then $\mult(A) = \lambda$. 

Let $H:= \{\sigma \in S_m ~|~ \sigma(x) = x, \text{ for all } x \in (X^m)_{A}\}$. Then $H$ is isomorphic to $\prod_{i=1}^m S_{\lambda_i}$. The variety $\Sym^m X_{\lambda}$ is the quotient of $(X^m)_A$ under the action of $H$. Thus, $\Sym^m X_{\lambda}$ is isomorphic to the quotient of  
\[
\prod_{A_i \neq \emptyset} X = \prod_{\lambda_i \neq 0} X^{\lambda_i},
\]
by the action of $\prod_{i=1}^m S_{\lambda_i}$, where $S_{\lambda_i}$ acts on the factor $X^{\lambda_i}$ by permutation of coordinates.  Thus, $\Sym^{m} X_{\lambda}$ is isomorphic to $\prod_{\lambda_i \neq 0}^m \Sym^{\lambda_i} X$. 

We fix a point $x_0 \in X(k)$. By Lemma \ref{lemma singlep1}, the generic point of $\Sym^m X$ is connected to $x_0^m$ by a rational curve. For any integer $m$, let $\@P(m)$ be the statement that for any point $x$ of $\Sym^m X$, one of the following conditions holds:
\begin{itemize}
\item[(1)] $\overline{\{x\}}$ contains $x_0^m$, and $x$ and $x_0^m$ have the same image in $\pi_0^{\#A^1}(\overline{\{x\}})(\kappa(x))$. 
\item[(2)] $x$ is a movable point of $X$ (in the sense of Definition \ref{defn movable}). 
\end{itemize}
We see by Theorem \ref{theorem:criterion2} that proving the statement $\@P(m)$ for all $m \in \N$ will complete the proof. Since any point outside the closed subscheme $\bigcup_{\lambda \in \Lambda_m} \Sym^m X_{\lambda}$ is a smooth point (and hence, movable), we see that it suffices to show that $\Sym^m X_{\lambda}$ is $\#A^1$-connected and contains $x_0^m$ for any $\lambda \in \Lambda_m$. The fact that $x_0^m \in \Sym^m X_{\lambda}$ follows form the definitions, and so it remains to prove that this subscheme is $\#A^1$-connected. 

We prove the statement $\@P(m)$ by induction on $m$. The case $m = 1$ is trivial.  Indeed, as $X$ is smooth, every point of $X$ other than the generic point is movable. Let $x_0$ be any element of $X(k)$.  As $X$ is $\#A^1$-connected, $\eta$ and $x_0$ map to the same element of $\pi_0^{\#A^1}(X)(K)$.  

Suppose that $\@P(l)$ is known to be true for any  $l < m$. Then, for any $\lambda \in \Lambda_m$, we see that $$\Sym^{m} X_{\lambda} = \prod_{\lambda_i \neq 0}^m \Sym^{\lambda_i} X$$ is $\#A^1$-connected (since $\lambda_i < m$ for every $i$) as it is a product of $\#A^1$-connected varieties. This completes the proof. 
\end{proof}

\end{document}